\newcommand{\bb}[1]{\mathbb{#1}}
\newcommand{\cc}[1]{\mathcal{#1}}
\newcommand{\ra}[1]{\overrightarrow{#1}}
\newcommand{\la}[1]{\overleftarrow{#1}}
\def\specrel#1#2{\mathrel{\mathop{\kern0pt #1}\limits_{\!\!#2\!\!}}}
\newcommand{\bw}{{\rm bw}}
\newcommand{\pw}{{\rm pw}}
\newcommand{\tw}{{\rm tw}}
\newcommand{\brw}{{\rm brw}}
\newcommand{\B}{\mathcal B}
\newcommand{\F}{\mathcal F}
\renewcommand{\P}{\mathcal P}
\renewcommand{\S}{\mathcal S}
\newcommand{\T}{\mathcal T}
\newcommand{\V}{\mathcal V}
\newcommand{\vr}{\ra{r}}
\newcommand{\vs}{\ra{s}}
\newcommand{\rv}{\la{r}}
\newcommand{\sv}{\la{s}}
\newcommand{\vS}{\ra{S}}
\newcommand{\td}{tree-decom\-po\-si\-tion}
\newcommand\COMMENT[1]{}
\numberwithin{equation}{section}
\newtheorem{theorem}{Theorem}
\newtheorem{lemma}[theorem]{Lemma}
\newtheorem{corollary}[theorem]{Corollary}
\begin{document}
\title{Duality theorems for blocks and tangles in graphs}
\author{Reinhard Diestel, Philipp Eberenz and Joshua Erde}
\maketitle
\begin{abstract}
We prove a duality theorem applicable to a a wide range of  specialisations, as well as to some generalisations, of tangles in graphs. It generalises the classical tangle duality theorem of Robertson and Seymour, which says that every graph either has a large-order tangle or a certain low-width tree-decomposition witnessing that it cannot have such a tangle.%
   \COMMENT{}%
   \looseness=-1

Our result also yields duality theorems for profiles and for $k$-blocks. This solves a problem studied, but not solved, by Diestel and Oum and answers an earlier question of Carmesin, Diestel, Hamann and Hundertmark.%
   \COMMENT{}\looseness=-1
\end{abstract}

\section{Introduction}
There are a number of theorems about the structure of sparse graphs that assert a duality between the existence of a highly connected substructure and a tree-like overall structure. For example, if a connected graph $G$ has no 2-connected subgraph or minor, it is a tree. Less trivially, the graph minor structure theorem of Robertson and Seymour says that if $G$ has no $K_n$-minor then it has a \td\ into parts that are `almost' embeddable in a surface of bounded genus; see~\cite{DiestelBook16}.

Another example of a highly connected substructure is that of a \emph{$k$-block}, introduced by Mader~\cite{mader78} in 1978 and studied more recently in~\cite{ForcingBlocks, confing, CG14:isolatingblocks}. This is a maximal set of at least $k$ vertices in a graph~$G$ such that no two of them can be separated in~$G$ by fewer than $k$ vertices.

One of our main results is that the non-existence of a $k$-block, too, is always witnessed by a tree structure (Theorem~\ref{t:block}). This problem was raised in~\cite[Sec.~7]{ForcingBlocks}.%
   \COMMENT{}
   In~\cite{TangleTreeAbstract}, Diestel and Oum used a new theory of `abstract separation systems'~\cite{AbstractSepSys} in pursuit of this problem, but were unable to find the tree structures needed: the simplest witnesses to the nonexistence of $k$-blocks they could find are described in~\cite{DiestelOumDualityII}, but they are more complicated than trees. Our proof of Theorem~\ref{t:block}, and the rest of this paper, are still based on the theory developed in \cite{AbstractSepSys} and~\cite{TangleTreeAbstract}, and we show that there are tree-like obstructions to the existence of $k$-blocks after all.\looseness=-1

Tangles, introduced by Robertson and Seymour in~\cite{GMX}, are substructures of graphs that also signify high local connectivity, but of a less tangible kind than subgraphs, minors, or blocks. Basically, a tangle does not tell us `what' that substructure is, but only `where' it is: by orienting all the low-order separations of the graph in some consistent way, which we then think of as pointing `towards the tangle'. See Section~\ref{s:dual} below, or~\cite{DiestelBook16}, for a formal introduction to tangles.

Tangles come with dual tree structures called {\em branch decompositions\/}. Although defined differently, they can be thought of as \td s of a particular kind. In~\cite{TangleTreeAbstract, TangleTreeGraphsMatroids}, Diestel and Oum generalised the notion of tangles to ways of consistently orienting the low-order separations of a graph so as to describe other known types of highly connected substructures too, such as those dual to \td s of low width. We shall build on~\cite{TangleTreeAbstract} to find dual tree structures for various types of tangles, for blocks, and for `profiles': a common generalisation of blocks and tangles introduced in~\cite{ProfilesNew} and defined formally in Section~\ref{s:prof}.

Let us describe our results  more precisely. A~classical $k$-tangle, as in~\cite{GMX}, is an orientation of all the separations $\{A,B\}$ of order~${<k}$ in a graph~$G$, say as~$(A,B)$ rather than as~$(B,A)$, so that no three of these cover~$G$ by the subgraphs that $G$ induces on their `small sides'~$A$. Let $\T$ denote the set of all such forbidden triples of oriented separations of~$G$, irrespective of their order. The tangle duality theorem of Robertson and Seymour then asserts that, given~$k$, either the set $S_k$ of all the separations of~$G$ of order~$<k$ can be oriented in such a way as to induce no triple from~$\T$~-- an orientation of~$S_k$ we shall call a $\T$-tangle~-- or $G$ has a \td\ of a particular type: one from which it is clear that $G$ cannot have a $k$-tangle, i.e., a $\T$-tangle of~$S_k$.

Now consider any superset $\F$ of~$\T$. Given~$k$, the orientations of~$S_k$ with no subset in~$\F$ will then be particular types of tangles. Our $\F$-tangle duality theorem yields duality theorems for all these: if $G$ contains no such `special' tangle, it will have a \td\ that witnesses this. Formally, to every such~$\F$ and~$k$ there will correspond a class $\T_\F(k)$ of \td s that witness the non-existence of an $\F$-tangle of~$S_k$, and which are shown to exist whenever a graph has no $\F$-tangle of~$S_k$:

\begin{theorem}\label{t:Ftangle}
For every finite graph $G$, every set $\F\supseteq\T$ of sets of separations of~$G$, and every integer $k > 2$, exactly one of the following statements holds:
\begin{itemize}\itemsep=0pt\vskip-\smallskipamount\vskip0pt
\item $G$ admits an $\F$-tangle of~$S_k$;
\item $G$ has a \td\ in~$\T_\F(k)$.
\end{itemize}
\end{theorem}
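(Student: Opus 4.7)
The plan is to reduce Theorem~\ref{t:Ftangle} to the abstract tangle--tree duality theorem for submodular separation systems from~\cite{TangleTreeAbstract}. That theorem says: given any submodular separation system $\vS$ and any family $\F^*$ of stars of separations in $\vS$ which is closed under \emph{shifting}, exactly one of the following holds: either $\vS$ has an orientation avoiding every set in $\F^*$, or there is an $S$-tree over $\F^*$.

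The first step is to observe that $S_k$ is indeed a submodular separation system, since the order $|A\cap B|$ on separations of $G$ is submodular. The heart of the proof is then to replace the given $\F\supseteq\T$ by a family $\F^*$ of stars of separations in $S_k$ which is closed under shifting and whose $\F^*$-avoiding orientations of $S_k$ coincide with the $\F$-tangles. One obtains $\F^*$ by \emph{uncrossing}: whenever an element of $\F$ contains a crossing pair $\vr,\vs$, submodularity allows us to replace them by a nested substitute pair in $S_k$, producing a star which we add to $\F^*$. We then close under all admissible shifts. The key point to verify is that this enlargement is conservative~-- every $\F^*$-avoiding orientation is still an $\F$-tangle~-- so no new spurious tangles are ruled out.

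With $\F^*$ in hand, the abstract theorem applies and delivers one of its two alternatives. The first is an orientation of $S_k$ avoiding $\F^*$, hence an $\F$-tangle of $S_k$ directly. The second is an $S_k$-tree over $\F^*$, which is translated into an honest \td\ of $G$ via the standard procedure (each node's bag is the vertex set encoded by the star labelling it); we define $\T_\F(k)$ as the class of all \td s obtained this way. That the two alternatives are mutually exclusive is immediate: a \td\ in $\T_\F(k)$ exhibits at each interior node a star from $\F^*$ which every orientation of $S_k$ compatible with the tree must contain, preventing such an orientation from being an $\F$-tangle.

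The main obstacle is the uncrossing/shifting construction of~$\F^*$. For the special case $\F=\T$ this is essentially the classical Robertson--Seymour argument recast abstractly, as carried out in~\cite{AbstractSepSys,TangleTreeAbstract}. For arbitrary $\F\supseteq\T$ it requires a uniform treatment, showing that the uncrossed star of any ``bad'' set remains bad in the sense that every orientation of $S_k$ containing it already contains an element of $\F$, and similarly for shifts. Submodularity of the order function on separations of $G$ is what makes the uncrossing possible at all, and this is where the graph-theoretic content of the theorem enters an otherwise purely abstract argument.
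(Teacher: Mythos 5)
Your route is the paper's own: uncross $\F$ into a family of stars, close under shifting, apply the Strong Duality Theorem (Theorem~\ref{t:dual}) to the resulting star system, and translate $S_k$-trees into \td s. But the step you yourself single out as ``the key point to verify''~-- that the passage from $\F$ to the star family is conservative~-- is left unproved, and you attribute it to the wrong mechanism. Submodularity of the order function only guarantees that uncrossing is \emph{possible}, i.e.\ that at least one of $\ra{x_1}\wedge\la{x_2}$ and $\ra{x_2}\wedge\la{x_1}$ again lies in $\ra{S_k}$; it says nothing about whether an $\F$-tangle avoids the uncrossed stars. What actually drives the paper's Lemmas \ref{l:*tangle}, \ref{l:shift} and~\ref{l:close} is that $\T$, and hence every $\F\supseteq\T$, contains all the profile triples $\{\vr,\vs,\rv\wedge\sv\}$: if a consistent orientation $O$ avoids $\sigma=\{\ra{x_1},\dots,\ra{x_n}\}$ but contains the set obtained by uncrossing $\ra{x_1},\ra{x_2}$, then $\la{x_1}\in O$ and $O$ contains $\{\la{x_1},\ra{x_2},\ra{x_1}\wedge\la{x_2}\}\in\P\subseteq\F$, a contradiction; the same mechanism is what makes closing under shifts conservative. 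Without explicitly invoking $\P\subseteq\T\subseteq\F$, your ``uniform treatment'' for arbitrary $\F\supseteq\T$ has no argument behind it.

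The second gap is regularity. The other direction of conservativity~-- an orientation avoiding the star system also avoids $\F$~-- needs \emph{strong} consistency: the uncrossed separations are only $\le$ the originals, so to deduce $\sigma\subseteq O$ from $\tau\subseteq O$ one needs $O$ to contain every $\vr\le\vs\in O$, equivalently (Lemma~\ref{l:small}) to be consistent \emph{and} contain all small separations. The paper secures this by augmenting the star system with all singletons $\{\ra{x}\}$ with $\la{x}$ small~-- which also makes it \emph{standard}, a hypothesis of Theorem~\ref{t:dual} that your statement of it omits~-- and must then check that no $\F$-tangle of $S_k$ is thereby lost. That check goes through $\F\supseteq\T\supseteq\P$ and Lemma~\ref{l:strong} (every $k$-profile is regular for $k>2$), and it is the only place the hypothesis $k>2$ enters. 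Your proposal never mentions regularity or uses $k>2$, so as written it cannot match the orientations produced by the abstract theorem with the $\F$-tangles the statement is about.
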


Every $k$-block also defines an orientation of~$S_k$: as no separation ${\{A,B\}\in S_k}$ separates it, it lies entirely in~$A$ or entirely in~$B$. These orientations of~$S_k$ need not be $k$-tangles, so we cannot apply Theorem~\ref{t:Ftangle} to obtain a duality theorem for $k$-blocks. But still, we shall be able to define classes $\T_\B(k)$ of \td s that witness the non-existence of a $k$-block, in the sense that graphs with such a \td\ cannot contain one, and which always exist for graphs without a $k$-block:

\begin{theorem}\label{t:block}
For every finite graph $G$ and every integer $k > 0$ exactly one of the following statements holds:
\begin{itemize}\itemsep=0pt\vskip-\smallskipamount\vskip0pt
\item $G$ contains a $k$-block;
\item $G$ has a \td\ in~$\T_\B(k)$.
\end{itemize}
\end{theorem}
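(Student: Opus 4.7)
The plan is to derive Theorem~\ref{t:block} from the same abstract duality machinery that yields Theorem~\ref{t:Ftangle}, by recasting $k$-blocks as a type of $\F$-avoiding orientation of~$S_k$ for an appropriately chosen family~$\F_\B$ of `forbidden' finite sets of oriented separations.

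First, I~would observe that a $k$-block~$b$ in~$G$ is equivalent to a particular orientation~$O_b$ of~$S_k$: orient each $\{A,B\}\in S_k$ as $(A,B)$ whenever $b\subseteq B$, which is well defined because no separation of order~$<k$ can split a $k$-block. Conversely, for any orientation~$O$ of~$S_k$ the set $b_O:=\bigcap_{(A,B)\in O}B$ is a $k$-block of~$G$ precisely when $|b_O|\ge k$. Hence $G$ has no $k$-block exactly when for every orientation~$O$ of~$S_k$ some finite $\sigma\subseteq O$ satisfies $\bigl|\bigcap_{(A,B)\in\sigma}B\bigr|<k$.

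Guided by this, I~would let $\F_\B$ consist of all finite sets $\sigma$ of oriented separations of~$G$ for which $\bigl|\bigcap_{(A,B)\in\sigma}B\bigr|<k$, so that $k$-blocks correspond bijectively to $\F_\B$-avoiding orientations of~$S_k$. The next step is to verify that $\F_\B$ satisfies the axiomatic requirements -- being \emph{standard} and \emph{closed under shifting} -- of the abstract $\F$-tangle duality theorem developed in~\cite{TangleTreeAbstract} that already underlies our Theorem~\ref{t:Ftangle}. Standardness is immediate: any degenerately oriented separation $(A,B)$ with $|B|<k$ already lies in~$\F_\B$ as a singleton, so no $\F_\B$-avoiding orientation can pick such a side.

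The main obstacle is closure under shifting: if $\sigma\in\F_\B$ and $(A,B)\in\sigma$ is replaced by a shifted separation $(A',B')\ge(A,B)$ produced by the procedure of~\cite{TangleTreeAbstract}, we need the modified set $\sigma'$ to lie again in~$\F_\B$. Because $(A',B')\ge(A,B)$ entails $B'\subseteq B$, the relevant intersection can only shrink, so $\sigma'\in\F_\B$ follows at once, while submodularity of the order function keeps $(A',B')$ inside~$S_k$. Once shifting is established, the abstract duality theorem delivers the dichotomy: either an $\F_\B$-avoiding orientation of~$S_k$, which our correspondence translates into a $k$-block; or a tree-decomposition of~$G$ of the canonical form produced by the theorem, which we take as the definition of~$\T_\B(k)$. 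A~final short check that no graph with a tree-decomposition in~$\T_\B(k)$ can contain a $k$-block closes the `exactly one' dichotomy.
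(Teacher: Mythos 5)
Your proposal has the right starting point---the correspondence between $k$-blocks and orientations of $S_k$ avoiding the family $\F_\B=\B_k$ of sets $\sigma$ with $\bigl|\bigcap_{(A,B)\in\sigma}B\bigr|<k$ is exactly the paper's Section~2.3---but it then skips over the central difficulty, which is that the Strong Duality Theorem of~\cite{TangleTreeAbstract} (Theorem~\ref{t:dual} here) applies only to a standard set of \emph{stars}, and $\B_k$ is emphatically not a set of stars: its members are arbitrary collections of pairwise crossing separations with small common ``big side''. There is no off-the-shelf duality theorem for such an $\F$; producing one is precisely the content of this paper. The paper's route is to \emph{uncross} $\B_k$ into a set $\B_k^*$ of stars (Lemma~\ref{l:uncross}) and to prove, using crucially that $\B_k\supseteq\P_{S_k}$ (every profile triple with small third big side is in $\B_k$), that the regular $\B_k$-tangles coincide with the regular $\B_k^*$-tangles (Lemma~\ref{l:*tangle}). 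Your plan contains no analogue of this step.

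The second genuine error is in your closure-under-shifting argument. Shifting a star $\tau=\{\ra{x_1},\dots,\ra{x_n}\}$ from $\ra{r}$ to $\ra{s}$ replaces only the distinguished element $\ra{x_1}\ge\ra{r}$ by the larger $\ra{x_1}\vee\ra{s}$; every other element $\ra{x_i}$ is replaced by the \emph{smaller} $\ra{x_i}\wedge\la{s}$, whose big side $B_i\cup C$ is \emph{larger}. So the intersection of the big sides of the shifted star is not contained in that of the original, and your claim that ``the relevant intersection can only shrink'' fails; the star subset of $\B_k$ is not closed under shifting in this naive sense. The paper handles this by force: it closes $\F^*$ under shifting to get $\hat{\F^*}$ (whose members may leave $\B_k$ entirely) and then proves Lemma~\ref{l:shift}---again using the profile property $\P_S\subseteq\F$---that any regular $\P$-tangle containing a shifted star must already contain the original, so that passing to $\hat{\F^*}$ does not change the class of tangles (Lemma~\ref{l:close}). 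A standardisation step ($\overline{\F^*}$) and an appeal to separability of $\ra{S_k}$ then permit the application of Theorem~\ref{t:dual}. Without the uncrossing and the tangle-level shifting lemma, your argument does not go through.
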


Finally, we define classes $\T_\P(k)$ of \td s which graphs with a $k$-profile cannot have, and prove the following duality theorem for profiles:

\begin{theorem}\label{t:prof}
For every finite graph $G$ and every integer $k > 2$ exactly one of the following statements holds:
\begin{itemize}\itemsep=0pt\vskip-\smallskipamount\vskip0pt
\item $G$ has a $k$-profile;
\item $G$ has a \td\ in~$\T_\P(k)$.
\end{itemize}
\end{theorem}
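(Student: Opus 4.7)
My plan is to derive Theorem~\ref{t:prof} from the abstract duality framework of~\cite{TangleTreeAbstract} that also underlies Theorem~\ref{t:Ftangle}, but with a family of forbidden configurations tailored to profiles rather than tangles. By definition, a $k$-profile is a consistent orientation $P$ of~$S_k$ satisfying the \emph{profile axiom}: whenever $(A,B),(C,D)\in P$ and the corner separation $\{A\cup C,\,B\cap D\}$ lies in~$S_k$, then $(A\cup C,\,B\cap D)\in P$. Equivalently, $P$ avoids every \emph{corner triple} $\sigma = \{(A,B),\,(C,D),\,(B\cap D,\,A\cup C)\}$ all three of whose elements lie in $S_k$. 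I would let $\F_\P$ be the collection of all such corner triples, enriched with the degenerate singletons needed to make $\F_\P$ \emph{standard} in the sense of~\cite{AbstractSepSys}.

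The first step is to apply the general abstract duality theorem of~\cite{TangleTreeAbstract} to $\F_\P$: this should yield that either $S_k$ admits an $\F_\P$-avoiding orientation, which is precisely a $k$-profile, or $S_k$ has an $\F_\P$-tree, i.e.\ a nested tree-set of separations each of whose stars lies in~$\F_\P$. I would then define $\T_\P(k)$ as the class of \td s arising from such $\F_\P$-trees via the standard correspondence between nested tree-sets and \td s. It is routine to verify that a graph with a \td\ in~$\T_\P(k)$ cannot contain a $k$-profile: any such profile would orient the star associated with some tree-node into~$\F_\P$, thereby containing a corner triple and violating the profile axiom. In contrast to Theorem~\ref{t:Ftangle}, the family $\F_\P$ is \emph{not} required to contain~$\T$; indeed, profiles form a strictly larger class of orientations than tangles, so the forbidden family is correspondingly smaller.

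The main obstacle is verifying the hypothesis of the abstract theorem, namely that $\F_\P$ is closed under shifting. Given a corner triple $\sigma\in\F_\P$, a distinguished element $s\in\sigma$, and a separation along which the shift is to be carried out, one must show that replacing $s$ in $\sigma$ by the shifted separation produces a set that is still in~$\F_\P$. Algebraically this amounts to checking that the corner structure of $\sigma$ is preserved (or can be restored to) after the shift, which reduces to repeated use of the distributive laws for $\cup$ and $\cap$; the subtle point is to control the orders of the resulting separations so that they remain in~$S_k$, and to handle the degenerate cases where shifting trivialises one of the three components. These checks are analogous in spirit to, but technically distinct from, those carried out in the proof of Theorem~\ref{t:Ftangle}, because the profile axiom involves a specific corner construction rather than an arbitrary triple of small sides covering~$G$. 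Once the shifting property is established for $\F_\P$, the abstract duality theorem delivers the dichotomy stated in Theorem~\ref{t:prof}.
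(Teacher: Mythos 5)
Your overall strategy---specialising the abstract duality theorem of~\cite{TangleTreeAbstract} to a forbidden family encoding the profile axiom---is the right starting point, but the proposal skips the central difficulty that the paper's entire Section~\ref{s:proof} exists to overcome. The Strong Duality Theorem (Theorem~\ref{t:dual}) applies only to a standard set of \emph{stars}, whereas your family $\F_\P$ consists of the corner triples $\{(A,B),(C,D),(B\cap D,A\cup C)\}$, which are in general not stars: $(A,B)$ and $(C,D)$ may cross. So the theorem cannot simply be ``applied to $\F_\P$''. Nor can one repair this by restricting to the stars inside $\cc{P}$: the paper notes explicitly that the $\cc{P}'$-tangles, for the star-subset $\cc{P}'\subseteq\cc{P}$, form a strictly larger class than the $k$-profiles, so that naive fix changes the first side of the dichotomy. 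The paper's actual route is to pass to an \emph{uncrossing} $\cc{F}^*$ of $\cc{F}$ (Lemma~\ref{l:uncross}) and to prove (Lemma~\ref{l:*tangle}) that for regular orientations avoiding $\cc{F}^*$ is equivalent to avoiding $\cc{F}$---an equivalence whose proof itself uses the profile axiom $\cc{P}_S\subseteq\cc{F}$ in an essential way. None of this appears in your proposal.

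Your treatment of the shifting condition has the same problem in a second form. Shifting is defined for stars contained in $\ra{S}_{\geq\ra{r}}\setminus\{\la{r}\}$, so one cannot ``shift a corner triple''; and even for the uncrossed family the paper does \emph{not} verify closure under shifting by a distributivity computation of the kind you sketch. Instead it forcibly closes $\cc{F}^*$ under shifting to obtain $\hat{\cc{F}^*}$ and then shows (Lemma~\ref{l:shift}, again invoking the profile property of the orientation) that the added shifts create no new obstructions, so the regular $\hat{\cc{F}^*}$-tangles are still exactly the regular $\cc{F}$-tangles (Lemma~\ref{l:close}). Finally, you ignore regularity: Theorem~\ref{t:dual} applied to $\overline{\cc{F}^*}$ characterises the \emph{regular} $\cc{F}$-tangles, and one needs Lemma~\ref{l:strong} (every $k$-profile of a graph is regular when $k>2$) to identify these with all $k$-profiles; this is precisely why the hypothesis $k>2$ appears in the statement, a point your proposal never addresses. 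Without the uncrossing argument, the shift-closure argument, and the regularity lemma, the proposal does not amount to a proof.
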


For readers already familiar with profiles~\cite{ProfilesNew} we remark that, in fact, we shall obtain a more general result than Theorem~\ref{t:prof}: our Theorem~\ref{t:main} is a duality theorem for all regular profiles in abitrary submodular%
   \COMMENT{}
   abstract separation systems, including the standard ones in graphs and matroids but many others too~\cite{MonaLisa}.

Like Theorem~\ref{t:Ftangle}, Theorems \ref{t:block} and~\ref{t:prof} are `structural' duality theorems in that they identify a structure that a graph~$G$ cannot have if it contains a $k$-block or $k$-profile, and must have if it does not. Alternatively, we can express the same duality more compactly in terms of graph invariants, as follows. Let
 \begin{eqnarray*}
  \beta(G) &:=& \max\,\{\,k\mid G\text{ has a $k$-block }\}\\
  \pi(G) &:=&\max\,\{\,k\mid G\text{ has a $k$-profile }\}
 \end{eqnarray*}
be the {\em block number\/} and the {\em profile number\/} of~$G$, respectively, and let
 \begin{eqnarray*}
   \bw(G) &:=& \min\,\{\,k\mid G\text{ has a \td\ in }\T_\B(k+1)\,\}\\
   \pw(G) &:=& \min\,\{\,k\mid G\text{ has a \td\ in }\T_\P(k+1)\,\}
 \end{eqnarray*}%
   \COMMENT{}
 be its {\em block-width\/} and {\em profile-width\/}. Theorems~\ref{t:block} and~\ref{t:prof} can now be rephrased as

\begin{corollary}\label{invariants}
As invariants of finite graphs, the block and profile numbers agree with the block- and profile-widths:
 $$\beta = \bw\quad\text{and}\quad\pi = \pw.\eqno\qed$$
\end{corollary}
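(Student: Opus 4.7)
The plan is to derive each identity by showing that, for every integer $k$, the inequality $\beta(G)\ge k$ is equivalent to $\bw(G)\ge k$, and similarly $\pi(G)\ge k$ is equivalent to $\pw(G)\ge k$; this will force equality of the invariants.

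First I would unwind the definitions on the width side. By definition of~$\bw$, we have $\bw(G)\le k-1$ exactly when $G$ admits a \td\ in $\T_\B(k)$; contrapositively, $\bw(G)\ge k$ exactly when $G$ has no such \td. Analogously, $\pw(G)\ge k$ iff $G$ has no \td\ in~$\T_\P(k)$. On the other side, $\beta(G)\ge k$ iff $G$ contains a $k$-block, and $\pi(G)\ge k$ iff $G$ has a $k$-profile.

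Now Theorem~\ref{t:block} asserts that for every $k>0$ exactly one of `$G$ has a $k$-block' and `$G$ has a \td\ in $\T_\B(k)$' holds, so these two conditions are negations of one another. Combining this with the reformulations of the previous paragraph yields $\beta(G)\ge k\iff \bw(G)\ge k$ for all $k>0$, and hence $\beta(G)=\bw(G)$. The argument for $\pi=\pw$ is identical, invoking Theorem~\ref{t:prof} in place of Theorem~\ref{t:block}, for all $k>2$.

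The only nuisance is the range restriction $k>2$ in Theorem~\ref{t:prof}, which I would handle by checking the small cases directly: a $1$-profile and a $2$-profile exist in every nonempty graph (respectively every graph with at least one edge / being `connected enough'), matching the trivially small values of $\pw$ given by the empty or one-node tree-decompositions in $\T_\P(k)$ for $k\le 2$, so equality $\pi=\pw$ also holds in this range. No genuine obstacle is expected, since the corollary is a purely logical rephrasing of the two duality theorems; the only care needed is in aligning the off-by-one in the $k+1$ appearing in the definitions of $\bw$ and $\pw$ with the statement of the theorems.
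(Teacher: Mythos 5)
Your main argument is exactly the intended one: the paper offers no separate proof of Corollary~\ref{invariants} (it is stated with a \qed\ as an immediate rephrasing of Theorems~\ref{t:block} and~\ref{t:prof}), and your unwinding of the definitions of $\beta,\pi,\bw,\pw$ together with the ``exactly one'' dichotomies is precisely that rephrasing. One small slip in your treatment of the boundary cases $k\le 2$ for profiles: since every graph has a $1$- and a $2$-profile, we have $\pi(G)\ge 2$, so what must be checked is that $G$ has \emph{no} \td\ in $\T_\P(1)$ or $\T_\P(2)$ (which follows from the easy ``at most one'' direction of the duality, as the existence of such a tree would contradict Lemma~\ref{l:two}); your claim that $\T_\P(k)$ for $k\le2$ contains trivial one-node decompositions points the wrong way, as that would force $\pw(G)\le 1<\pi(G)$. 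You also implicitly use that the sets $\{k: G\text{ has a }k\text{-block}\}$ and $\{k: G\text{ has a }k\text{-profile}\}$ are down-closed so that the maxima and minima line up; this is true (restrict a $k$-block or $k$-profile to $S_{k-1}$) and is likewise taken for granted by the paper.
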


In Section \ref{s:background} we introduce just enough about abstract separation systems~\cite{AbstractSepSys} to state the fundamental duality theorem of~\cite{TangleTreeAbstract}, on which all our proofs will be based. In Section \ref{s:proof} we give a proof of our main result, a duality theorem for regular profiles in submodular abstract separation systems. In Section~\ref{s:apply} we apply this to obtain structural duality theorems for $k$-blocks and $k$-profiles, and deduce Theorems \ref{t:Ftangle}--\ref{t:prof} as corollaries.
In Section~\ref{sec:width} we derive some bounds for the above width-parameters in terms of tree-width and branch-width.%
   \COMMENT{}

Any terms or notation left undefined in this paper are explained in~\cite{DiestelBook16}.

\section{Background Material}\label{s:background}
\subsection{Separation systems}
A \emph{separation} of a graph $G$ is a set $\{A,B\}$ of subsets of $V(G)$ such that $A \cup B = V$, and there is no edge of $G$ between $A \setminus B$ and $B \setminus A$. There are two \emph{oriented separations} associated with a separation, $(A,B)$ and $(B,A)$. Informally we think of $(A,B)$ as \emph{pointing towards} $B$ and \emph{away from} $A$. We can define a partial ordering on the set of oriented separations of $G$ by 
\[
(A,B) \leq (C,D) \text{ if and only if } A \subseteq C \text{ and } B \supseteq D.
\]
The \emph{inverse} of an oriented separation $(A,B)$ is the separation $(B,A)$, and we note that mapping every oriented separation to its inverse is an involution which reverses the partial ordering.

In~\cite{TangleTreeAbstract} Diestel and Oum generalised these properties of separations of graphs and worked in a more abstract setting. They defined a \emph{separation system} ${(\ra{S},\leq,{}^*)}$ to be a partially ordered set $\ra{S}$ with an order-reversing involution~${}^*$. The elements of $\ra{S}$ are called \emph{oriented separations}. Often a given element of~$\ra{S}$ is denoted by $\ra{s}$, in which case its inverse $\ra{s}^*$ will be denoted by $\la{s}$, and vice versa. Since ${}^*$ is ordering reversing we have that, for all $\ra{r},\ra{s} \in S$,
\[
\ra{r} \leq \ra{s} \text{ if and only if } \la{r} \geq \la{s}.\label{invcomp}
\]
A \emph{separation} is a set of the form $\{\ra{s},\la{s}\}$, and will be denoted by simply $s$. The two elements $\ra{s}$ and $\la{s}$ are the \emph{orientations} of $s$. The set of all such pairs $\{\ra{s},\la{s}\} \subseteq \ra{S}$ will be denoted by $S$.  If $\ra{s}=\la{s}$ we say $s$ is \emph{degenerate}. Conversely, given a set $S' \subseteq S$ of separations we write $\ra{S'} := \bigcup S'$ for the set of all orientations of its elements. With the ordering and involution induced from~$\ra{S}$, this will form a separation system.

Given a separation of a graph $\{A,B\}$ we can identify it with the pair $\{(A,B),(B,A)\}$ and in this way any set of oriented separations in a graph which is closed under taking inverses forms a separation system. When we refer to an oriented separation in a context where the notation explicitly indicates orientation, such as $\ra{s}$ or $(A,B)$, we will usually suppress the prefix ``oriented" to improve the flow of the narrative.

The \emph{separator} of a separation $s=\{A,B\}$ in a graph, and of its orientations~$\ra{s}$, is the set $A \cap B$. The \emph{order} of $s$ and $\ra{s}$, denoted as $|s|$ or as $|\ra{s}|$, is the cardinality of the separator, $|A \cap B|$.  Note that if $\ra{r}=(A,B)$ and $\ra{s}=(C,D)$ are separations then so are their \emph{corner separations} $\ra{r} \vee \ra{s} := (A \cup C, B \cap D)$ and $\ra{r} \wedge \ra{s} := (A \cap C , B \cup D)$. Our function $\ra{s}\mapsto|\ra{s}|$ is clearly {\em symmetric\/} in that $|\vs| = |\sv|$, and {\em submodular\/} in that 
\[
 |\ra{r} \vee \ra{s}| + |\ra{r} \wedge \ra{s}| \le |\ra{r}| + |\ra{s}|
\]
(in fact, with equality).

\begin{figure}[!ht]
\centering
\includegraphics[scale=1]{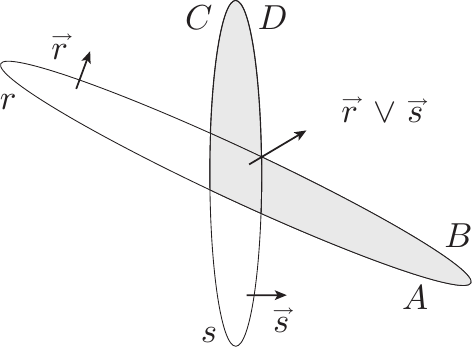}
\caption{The corner separation $\vr\vee\vs = (A \cup C, B \cap D)$}
\end{figure}

If an abstract separation system $(\vS,\le,{}^*)$ forms a lattice, i.e., if there exist binary operations $\vee$ and $\wedge$ on $\ra{S}$ such that $\ra{r} \vee \ra{s}$ is the supremum and $\ra{r} \wedge \ra{s}$ is the infimum of $\ra{r}$ and~$\ra{s}$, then we call $(\ra{S},\leq,{}^*,\vee,\wedge)$ a \emph{universe} of (oriented) separations. By~\eqref{invcomp}, it satisfies De~Mor\-gan's law:
\begin{equation}\label{deMorgan}
   (\vr\vee\vs)^* =\> \rv\wedge\sv.
\end{equation}%
   \COMMENT{}
   Any real, non-negative, symmetric and submodular function on a universe of separations, usually denoted as $\vs\mapsto |\vs|$, will be called an \emph{order function}.

Two separations $r$ and $s$ are \emph{nested} if they have $\leq$-comparable orientations. Two oriented separations $\ra{r}$ and $\ra{s}$ are \emph{nested} if $r$ and $s$ are nested.%
   \footnote{In general we will use terms defined for separations informally for oriented separations when the meaning is clear, and vice versa.}
    We say that $\ra{r}$ \emph{points towards $s$} (and $\la{r}$ \emph{points away from $s$}) if $\ra{r} \leq \ra{s}$ or $\ra{r} \leq \la{s}$. So two nested oriented separations are either $\leq$-comparable, or they point towards each other, or they point away from each other. If $\ra{r}$ and $\ra{s}$ are not nested we say that they \emph{cross}. A set of separations $S$ is \emph{nested} if every pair of separations in $S$ is nested, and a separation $s$ is \emph{nested} with a nested set of separations $S$ if $S \cup \{s\}$ is nested. 

A separation $\ra{r} \in \ra{S}$ is \emph{trivial in $\ra{S}$}, and $\la{r}$ is \emph{co-trivial}, if there exist an $s \in S$ such that $\ra{r} < \ra{s}$ and $\ra{r} < \la{s}$.  Note that if $\ra{r}$ is trivial, witnessed by some $s$, then, since the involution on~$\ra{S}$ is order-reversing, we have $\ra{r} < \ra{s} < \la{r}$. So, in particular, $\la{r}$ cannot also be trivial. Separations $\ra{s}$ such that $\ra{s} \leq \la{s}$, trivial or not, will be called \emph{small}.

In the case of separations of a graph $(V,E)$, the small separations are precisely those of the form $(A,V)$. The trivial separations are those of the form $(A,V)$ with $A \subseteq C \cap D$ for some separation $\{C,D\} \neq \{A,B\}$. Finally we note that there is only one degenerate separation in a graph, $(V,V)$.

\subsection{Tangle-tree duality in separation systems}\label{s:dual}

Let $\ra{S}$ be a separation system. An \emph{orientation} of $S$ is a subset $O \subseteq \ra{S}$ which for each $s \in S$ contains exactly one of its orientations $\ra{s}$ or~$\la{s}$. Given a universe $\ra{U}$ of separations with an order function, such as all the oriented separations of a given graph, we denote by 
\[
\ra{S_k} = \{ \ra{s} \in \ra{U} \colon  |\ra{s}| < k\}
\]
the set of all its separations of order less than~$k$. Note that $\ra{S_k}$ is again a separation system. But it is not necessarily a universe, since it may fail to be closed under the operations $\lor$ and~$\land$.

If we have some structure $\cc{C}$ in a graph that is `highly connected' in some sense, we should expect that no low order separation will divide it: that is, for every separation $s$ of sufficiently low order, $\cc{C}$ should lie on one side of~$s$ but not the other. Then $\cc{C}$ will {\em orient\/} $s$ as $\ra{s}$ or~$\la{s}$, choosing the orientation that `points to where it lies' according to some convention. For graphs, our convention is that the orientated separation $(A,B)$ points towards~$B$. And that if $\cc{C}$ is a $K_n$-minor of~$G$ with $n \geq k$, say, then $\cc{C}$ `lies on the side~$B$' if it has a branch set in~$B\setminus A$. (Note that it cannot have a branch set in $A\setminus B$ then.) Then $\cc{C}$ orients $\{A,B\}$ towards~$B$ by choosing $(A,B)$ rather than~$(B,A)$. In this way, $\cc{C}$ induces an orientation of all of~$S_k$. 

The idea of~\cite{TangleTreeAbstract}, now, following the idea of tangles, was to \emph{define} `highly connected substructures' in this way: as orientations of a given set $S$ of separations.

Any concrete example of `highly-connected substructures' in a graph, such as a $K_n$-minor or a $k$-block, will not induce arbitrary orientations of $S_k$: these orientations will satisfy some consistency rules. For example, consider two separations $(A,B) < (C,D)$. If our `highly connected' structure $\cc{C}$ orients $\{C,D\}$ towards $D$ then, since $B \supseteq D$ it should not orient $\{A,B\}$ towards~$A$.

We call an orientation $O$ of a set $S$ of separations in some universe~$\ra{U}$ \emph{consistent} if whenever we have distinct $r$ and $s$ such that $\ra{r} < \ra{s}$, the set $O$ does not contain both $\la{r}$ and $\ra{s}$. Note that a consistent orientation of~$S$ must contain all separations $\ra{r}$ that are trivial in~$S$ since, if $\ra{r} < \ra{s}$ and $\ra{r} < \la{s}$, then $\la{r}$ would be inconsistent with whichever orientation of $s$ lies in~$O$.

Given a set $\cc{F}$, we say that an orientation $O$ of $S$ \emph{avoids} $\cc{F}$ if there is no $F \in \cc{F}$ such that $F \subseteq O$.  So for example an orientation of $S$ is consistent if it avoids $\cc{F} = \{ \{\la{r},\ra{s}\} \subseteq \ra{S} \colon  r \neq s, \ra{r} < \ra{s} \}$. In general we will define the highly connected structures we consider by the collection $\cc{F}$ of subsets they avoid. For example a \emph{tangle of order $k$}, or $k${\em -tangle}, in a graph $G$ is an orientation of $S_k$ which avoids the set of triples 
\begin{equation}\label{tangleaxiom}
\cc{T} = \{\{(A_1,B_1),(A_2,B_2),(A_3,B_3)\} \subseteq \ra{U} \,: \, \bigcup_{i=1}^3 G[A_i] = G\}.
\end{equation}
Here, the three separations need not be distinct, so any $\cc{T}$-avoiding orientation of~$S_k$ will be consistent. More generally, we say that a consistent orientation of a set $S$ of separations which avoids some given set $\cc{F}$ is an \emph{$\cc{F}$-tangle (of~$S$)}. 

Given a set $S$ of separations, an \emph{$S$-tree} is a pair $(T,\alpha)$, of a tree $T$ and a function $\alpha : \overrightarrow{E(T)} \rightarrow \ra{S}$ from the set $\overrightarrow{E(T)}$ of directed edges of $T$ such that
\begin{itemize}
\item For each edge $(t_1,t_2) \in \overrightarrow{E(T)} $, if $\alpha(t_1,t_2) = \ra{s}$ then $\alpha(t_2,t_1) = \la{s}$.
\end{itemize}
The $S$-tree is said to be {\em over\/} a set~$\cc{F}$ if
\begin{itemize}
\item For each vertex $t \in T$, the set $\{\alpha(t',t) \, :\, (t',t) \in \overrightarrow{E(T)} \}$ is in $\cc{F}$.
\end{itemize}

Particularly interesting classes of $S$-trees are those over sets~$\cc{F}$ of `stars'. A~set $\sigma$ of nondegenerate oriented separations is a \emph{star} if $\ra{r} \leq \la{s}$ for all distinct ${\ra{r},\ra{s} \in \sigma}$. We say that a set $\cc{F}$ \emph{forces} a separation $\ra{r}$ if $\{ \la{r} \} \in \cc{F}$. And $\cc{F}$ is \emph{standard} if it forces every trivial separation in $\ra{S}$.

The main result of~\cite{TangleTreeAbstract} asserts a duality between $S$-trees over~$\cc{F}$ and $\cc{F}$-tangles when $\cc{F}$ is a standard set of stars satisfying a certain closure condition. Let us describe this next.

Suppose we have a separation $\ra{r}$ which is neither trivial nor degenerate. Let $S_{\geq \ra{r}}$ be the set of separations $x \in S$ that have an orientation $\ra{x} \geq \ra{r}$. Given $x \in S_{\geq \ra{r}} \setminus \{r\}$ we have, since $\ra{r}$ is nontrivial, that only one of the two orientations of $x$, say $\ra{x}$, is such that $\ra{x} \geq \ra{r}$ and $x$ is not degenerate. For every $\ra{s}\geq \ra{r}$ we can define a function $f\!\!\downarrow^{\ra{r}}_{\ra{s}}$ on $\ra{S}_{\geq \ra{r}} \setminus \{\la{r}\}$ by%
   \footnote{The exclusion of $\la{r}$ here is for a technical reason: if $\ra{r} < \la{r}$, we do not want to define $f\!\!\downarrow^{\ra{r}}_{\ra{s}}(\la{r})$ explicitly, but implicitly as the inverse of $f\!\!\downarrow^{\ra{r}}_{\ra{s}}(\ra{r})$. }
\[
f\!\!\downarrow^{\ra{r}}_{\ra{s}}(\ra{x}) := \ra{x} \vee \ra{s} \text{ and } f\!\!\downarrow^{\ra{r}}_{\ra{s}}(\la{x}) := (\ra{x} \vee \ra{s})^*.
\]
In general, the image in~$\ra{U}$ of this function need not lie in~$\ra{S}$.

\begin{figure}[!ht]
\centering
\includegraphics[scale=1]{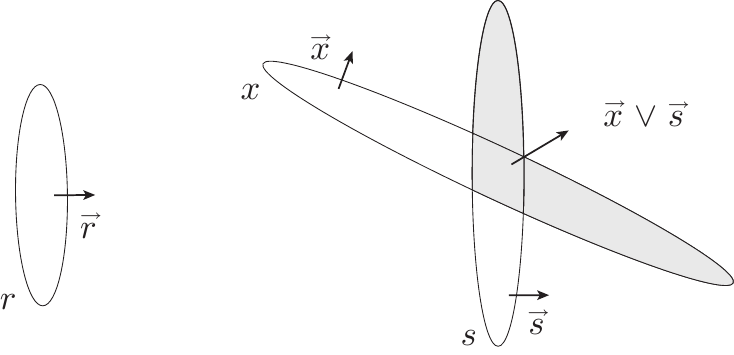}
\caption{Shifting a separation $\ra{x} \geq \ra{r}$ to $f\!\!\downarrow^{\ra{r}}_{\ra{s}}(\ra{x}) = \ra{x}\lor \ra{s}$. }
\end{figure}

We say that $\ra{s}\in\vS$ \emph{emulates $\ra{r}\in\vS$ in~$\vS$} if $\ra{r} \leq \ra{s}$ and the image of $f\!\!\downarrow^{\ra{r}}_{\ra{s}}$ is contained in $\ra{S}$. Given a standard set $\cc{F}$ of stars, we say further that $\ra{s}$ \emph{emulates}~$\ra{r}$ in~$\vS$ {\em for}~$\cc{F}$ if $\ra{s}$ emulates~$\ra{r}$ in~$\vS$ and the image under $f\!\!\downarrow^{\ra{r}}_{\ra{s}}$ of every star $\sigma \subseteq \ra{S}_{\geq \ra{r}} \setminus \{\la{r}\}$ that contains some separation $\ra{x}$ with $\ra{x} \geq \ra{r}$ is again in $\cc{F}$. 

We say that a separation system $\ra{S}$ is \emph{separable} if for any two nontrivial and nondegenerate separations $\ra{r},\la{r'} \in \ra{S}$ such that $\ra{r} \leq \ra{r'}$ there exists a separation $s \in S$ such that $\ra{s}$ emulates~$\ra{r}$ in~$\vS$ and $\la{s}$ emulates~$\la{r'}$ in~$\vS$. We say that $\ra{S}$ is \emph{$\cc{F}$-separable} if for all nontrivial and nondegenerate $\ra{r},\la{r'} \in \ra{S}$ that are not forced by $\cc{F}$ and such that $\ra{r} \leq \ra{r'}$ there exists a separation $s \in S$ with an orientation $\ra{s}$ that emulates~$\ra{r}$ in~$\vS$ for~$\cc{F}$ and such that $\la{s}$ emulates~$\la{r'}$ in~$\vS$ for~$\cc{F}$. Often one proves that $\ra{S}$ is $\cc{F}$-separable in two steps, by first showing that it is separable, and then showing that $\cc{F}$ is \emph{closed under shifting}: that whenever $\ra{s}$ emulates (in~$\vS$) some nontrivial and nondegenerate $\ra{r}$ not forced by~$\cc{F}$, then it does so for~$\cc{F}$.

We are now in a position to state the Strong Duality Theorem from~\cite{TangleTreeAbstract}.

\begin{theorem}\label{t:dual}
Let $\ra{S}$ be a separation system in some universe of separations, and $\cc{F}$ a standard set of stars. If $\ra{S}$ is $\cc{F}$-separable, exactly one of the following assertions holds:
\begin{itemize}
\item There exists an $S$-tree over $\cc{F}$;
\item There exists an $\cc{F}$-tangle of $S$.
\end{itemize}
\end{theorem}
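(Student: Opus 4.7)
My plan has two halves. First I would prove that the two alternatives are mutually exclusive. Suppose we have both an $S$-tree $(T,\alpha)$ over~$\cc{F}$ and an $\cc{F}$-tangle $O$ of~$S$. Orient each edge $t_1t_2$ of $T$ towards the endpoint $t_2$ for which $\alpha(t_1,t_2)\in O$. This is well-defined on non-degenerate labels, while standardness of $\cc{F}$ rules out problems at trivial or degenerate ones (the tangle, being consistent, must contain every trivial separation, and a degenerate label $\vs=\sv$ would force $\{\vs\}\in\cc F$ by standardness, hence $\vs\notin O$). Since $T$ is finite the resulting orientation has a sink~$t$; the set $\{\alpha(t',t):t't\in E(T)\}$ is then contained in~$O$, but also lies in~$\cc{F}$ by the definition of an $S$-tree over $\cc F$, contradicting the fact that $O$ avoids $\cc{F}$.

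For the substantive direction, I would assume that no $\cc{F}$-tangle of $S$ exists and construct an $S$-tree over $\cc{F}$ by induction on a suitable measure of ``size'' of the separation system, say $|S|$ minus the number of separations forced by~$\cc{F}$. If $S$ has no nontrivial non-degenerate separations, then standardness gives us a trivial $S$-tree on one or two vertices. Otherwise, since no $\cc{F}$-tangle exists, every consistent orientation of $S$ must meet~$\cc{F}$; picking a carefully chosen consistent partial orientation that cannot be extended, I would extract a pair $\vr\le\la{r'}$ of non-forced, nontrivial, non-degenerate separations witnessing an obstruction. Feeding this pair into the $\cc{F}$-separability hypothesis produces a separation $s$ whose two orientations $\vs,\sv$ emulate $\vr$ and $\la{r'}$ in~$\vS$ for~$\cc{F}$.

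Now I would apply induction to the two sub-systems $S_{\ge\vs}$ and $S_{\ge\sv}$. Each of these is strictly smaller than $S$ (the separation $s$ itself lies on the boundary and becomes forced on each side), and each inherits $\cc{F}$-separability from~$\vS$. Crucially, the emulation property means the shift map $f\!\!\downarrow^{\vr}_{\vs}$ sends any $\cc{F}$-avoiding orientation of $S_{\ge\vs}$ back to a consistent, $\cc{F}$-avoiding orientation on the appropriate part of~$S$; contrapositively, the absence of an $\cc{F}$-tangle in~$S$ transfers to the absence of an $\cc{F}$-tangle of $S_{\ge\vs}$ (after identifying separations with their shifts). The inductive hypothesis then yields $S_{\ge\vs}$- and $S_{\ge\sv}$-trees over~$\cc{F}$; pulling them back through the shifts and gluing them along a new edge labelled $\vs,\sv$ produces the desired $S$-tree.

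The technical heart of the argument — and where I expect most of the work to lie — is the passage from sub-system to full system via shifting. One must check that the image of every star of $S_{\ge\vs}\setminus\{r\}$ under $f\!\!\downarrow^{\vr}_{\vs}$ really lands in $\vS$ and in~$\cc{F}$ (this is exactly what ``emulates for $\cc{F}$'' buys us), and that the glued tree satisfies the star condition at the new gluing vertices, which requires separate attention to separations that become trivial after shifting and to the forced/degenerate edge cases swept under the rug of standardness. Choosing $\vr$ so that both the induction decreases and the emulation hypothesis is non-vacuous — essentially requiring $\vr,\la{r'}$ to be non-forced — is where the hypothesis that $\vS$ is $\cc{F}$-separable (rather than merely separable) is indispensable.
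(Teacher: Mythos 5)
You should first be aware that this paper does not prove Theorem~\ref{t:dual} at all: it is the Strong Duality Theorem of Diestel and Oum, quoted from~\cite{TangleTreeAbstract} and used as a black box, so the only proof to compare yours against is the (long, multi-lemma) one in that reference. Your first half, the mutual exclusivity, is essentially that proof's easy direction and is correct: orient each edge of $T$ towards the endpoint whose incoming label lies in $O$, take a sink $t$, and the star at $t$ lies in both $O$ and $\cc F$. One small correction: standardness of $\cc F$ concerns \emph{trivial} separations, not degenerate ones, so it does not give you $\{\vs\}\in\cc F$ for degenerate $\vs$; but no such appeal is needed, since for a degenerate label $\vs=\sv\in O$ the edge may be oriented either way and the sink's star still lies in $O$.

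The second half contains genuine gaps, and its overall shape differs from the actual proof in a way that matters. (i)~You induct on the sub-systems $S_{\ge\vs}$ and $S_{\ge\sv}$, but these need not inherit $\cc F$-separability from $\vS$: separability is quantified over pairs $\vr\le\la{r'}$ in the \emph{whole} of $\vS$ and requires the emulating separation to exist in $\vS$, and none of this restricts to a sub-system. So the inductive hypothesis cannot be invoked as stated. (ii)~The claimed transfer ``no $\cc F$-tangle of $S$ implies no $\cc F$-tangle of $S_{\ge\vs}$'' is equivalent to extending an $\cc F$-avoiding orientation of a sub-system to one of all of $S$; that \emph{is} the hard content of the theorem, and you only assert it. The proof in~\cite{TangleTreeAbstract} sidesteps both problems by keeping $S$ fixed and enlarging $\cc F$ instead: one inducts on the number of separations of $S$ not forced by $\cc F$, applies the inductive hypothesis to $\cc F\cup\{\{\sv\}\}$ and $\cc F\cup\{\{\vs\}\}$ (where $s$ comes from $\cc F$-separability applied to a carefully chosen $\vr\le\la{r'}$), and, if both calls return $S$-trees, glues them using a separate shifting lemma for $S$-trees~--- that every suitably tight $S$-tree over $\cc F\cup\{\{\vr\}\}$ can be shifted to one over $\cc F\cup\{\{\vs\}\}$ when $\vs$ emulates $\vr$ in $\vS$ for $\cc F$. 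That lemma, together with the tightness and irredundance bookkeeping it needs, is exactly the ``technical heart'' you flag but do not supply, so what you have is a plausible plan rather than a proof.
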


The property of being $\cc{F}$-separable may seem a rather strong condition. However in~\cite{TangleTreeGraphsMatroids} it is shown that for every graph the set $\ra{S_k}$ is separable, and all the sets $\cc{F}$ of stars whose exclusion describes classical notions of highly connected substructures are closed under shifting. Hence in all these cases $\ra{S_k}$ is $\cc{F}$-separable, and Theorem~\ref{t:dual} applies.

One of our main tasks will be to extend the applicability of Theorem~\ref{t:dual} to sets $\cc{F}$ of separations that are not stars, by constructing a related set $\cc{F}^*$ of stars whose exclusion is tantamount to excluding~$\cc{F}$. 

\subsection{Blocks, tangles, and profiles}\label{s:prof}
Suppose we have a graph $G=(V,E)$ and are considering the set $U$ of its separations. As mentioned before, it is easy to see that the tangles of order~$k$ in~$G$, as defined by Robertson and Seymour~\cite{GMX}, are precisely the $\cc{T}$-tangles of $S_k = \{ s \in U \colon  |s|<k\}$. In this case, if we just consider the set of stars in $\cc{T}$,

\medskip

 $\cc{T}^* :=  \big\{\,\{(A_1,B_1),(A_2,B_2),(A_3,B_3)\} \subseteq \ra{S} :$

   \smallskip\indent\indent\indent\indent\indent\indent
  $(A_i,B_i) \leq (B_j,A_j) \text{ for all } i,j \text{ and } \bigcup_{i=1}^3 G[A_i] = G\,\big\},$

   \medskip\noindent
then the $\cc{T}^*$-tangles of $S_k$ are precisely its $\cc{T}$-tangles~\cite{TangleTreeGraphsMatroids}. That is, a consistent orientation of $S_k$ avoids $\cc{T}$ if and only if it avoids $\cc{T}^*$. It is a simple check that $\cc{T}^*$ is a standard set of stars which is closed under shifting, and hence Theorem~\ref{t:dual} tells us that every graph either has a tangle of order $k$ or an $S_k$-tree over $\cc{T}^*$, but not both.%
   \COMMENT{}

Another highly connected substructure that has been considered recently in the literature are $k$-blocks. Given $k \in \bb{N}$ we say a set $I$ of at least $k$ vertices in a graph $G$ is \emph{$(<\!\!k)$-inseparable} if no set $Z$ of fewer than $k$ vertices separates any two vertices of $I \setminus Z$ in $G$. A maximal $(<\!\!k)$-inseparable set of vertices is called a \emph{$k$-block}. These objects were first considered by Mader~\cite{mader78}, but have been the subject of recent research~\cite{ForcingBlocks, confing, CG14:isolatingblocks}.

As indicated earlier, every $k$-block $b$ of $G$ defines an orientation $O(b)$ of~$S_k$: 
 $$O(b) := \{ (A,B) \in {\ra{S_k}} \colon  b \subseteq B\}.$$
Indeed, for each separation $\{A,B\} \in S_k$ exactly one of $(A,B)$ and $(B,A)$ will be in $O(b)$, since $A \cap B$ is too small to contain~$b$ and does not separate any two of its vertices. Hence, $O(b)$ is indeed an orientation of~$S_k$. Note also that $O(b)\ne O(b')$ for distinct $k$-blocks $b\ne b'$: by their maximality as $k$-indivisible sets of vertices there exists a separation $\{A,B\}\in S_k$ such that $A\setminus B$ contains a vertex of~$b$ and $B\setminus A$ contains a vertex of~$b'$, which implies that $(A,B)\in O(b')$ and $(B,A)\in O(b)$.

The orientations $O(b)$ of $S_k$ defined by a $k$-block $b$ clearly avoid
\[
\cc{B}_k := \big\{ \{ (A_i,B_i) \colon  i \in I\} \subseteq \ra{U} \colon  | \bigcap_{i \in I} B_i | < k\big\},
\]
since $b \subseteq B_i$ for every $(A_i,B_i) \in O(b)$ and $|b|\geq k$. Also, it is easily seen that every $O(b)$ is consistent. Thus, every such orientation $O(b)$ is an $\cc{F}$-tangle of $S_k$ for $\cc{F} = \cc{B}_k$. Conversely, if $O \subseteq S_k$ is a $\cc{B}_k$-tangle of $S_k$, then $b := \bigcap {\{ B \,|\, (A,B) \in O \}}$ is easily seen to be a $k$-block, and $O = O(b)$. The orientations of $S_k$ that are defined by a $k$-block, therefore, are precisely its $\cc{B}_k$-tangles.

The $\cc{B}_k$-tangles of $S_k$ and its $\cc{T}$-tangles (i.e., the ordinary $k$-tangles of~$G$) share the property that if they contain separations $(A,B)$ and $(C,D)$, then they cannot contain the separation $(B \cap D, A \cup C)$. Indeed, clearly this condition is satisfied by $O(b)$ for any $k$-block $b$, since if $b \subseteq B$ and $b \subseteq D$ then $b \subseteq B \cap D$ and hence $b \not\subseteq A \cup C$ if $\{B\cap D, A\cup C\}\in S_k$. For tangles, suppose that some tangle contains such a triple $\{ (A,B),(C,D),(B \cap D, A \cup C) \}$. Since $\{A,B\}$ and $\{C,D\}$ are separations of~$G$, every edge not contained in $G[A]$ or $G[C]$ must be in $G[B]$ and $G[D]$, and hence in $G[B \cap D]$. Therefore $G[A] \cup G[C] \cup G[B \cap D] = G$, contradicting the fact that the tangle avoids~$\cc{T}$.

Informally, if we think of the side of an oriented separation to which it points as `large', then the orientations of~$S_k$ that form a tangle or are induced by a $k$-block have the natural property that if $B$ is the large side of $\{A,B\}$ and $D$ is the large side of $\{C,D\}$ then $B \cap D$ should be the large side of $\{A \cup C, B \cap D\}$~-- if this separation is also in~$S_k$, and therefore oriented by~$O$. That is, the largeness of separation sides containing blocks or tangles is preserved by taking intersections.

Consistent orientations with this property are known as `profiles'. Formally, a \emph{$k$-profile in~$G$} is a $\cc{P}$-tangle of $S_k$ where
\[
\cc{P} := \big\{ \sigma \subseteq \ra{U} \mid\exists\, A,B,C,D \subseteq V\colon \sigma= \{ (A,B),(C,D),(B \cap D, A \cup C) \}\big\}.
\]

As we have seen,

\begin{lemma}
All orientations of $S_k$ that are tangles, or of the form~$O(b)$ for some $k$-block $b$ in~$G$, are $k$-profiles in~$G$.\qed
\end{lemma}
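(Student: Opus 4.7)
The plan is to unpack the definition of $k$-profile and verify directly that both kinds of orientations satisfy it. Recall that a $k$-profile in $G$ is a consistent orientation of $S_k$ avoiding $\cc{P}$, so I need to check (i)~consistency and (ii)~$\cc{P}$-avoidance in each case. Consistency has essentially been observed already: tangles are $\cc{T}$-tangles and $\cc{T}$ contains the inconsistent pairs (take $(A_1,B_1)=(A_2,B_2)$ and $(A_3,B_3)=(B_1,A_1)$), while $O(b)$ is consistent because if $(A,B)\le (C,D)$ and $b\subseteq B\supseteq D$ then also $b\subseteq D$, so $O(b)$ cannot contain both $(B,A)$ and $(C,D)$.

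For the $\cc{P}$-avoidance of $O(b)$: suppose $(A,B),(C,D)\in O(b)$ and $\{A\cup C,B\cap D\}\in S_k$. Then $b\subseteq B$ and $b\subseteq D$, hence $b\subseteq B\cap D$; since $|A\cap(A\cup C)\cap(B\cap D)|\le |A\cup C|\cap|B\cap D|<k$ is too small to contain $b$ and cannot separate any two of its vertices, the orientation of this separation placed in $O(b)$ is $(A\cup C,B\cap D)$, not the other one. Thus the forbidden triple $\{(A,B),(C,D),(B\cap D,A\cup C)\}$ never occurs in $O(b)$.

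For the $\cc{P}$-avoidance of a $k$-tangle $O$: suppose for contradiction such a triple lies in $O$. Every edge $e$ of $G$ that avoids $G[A]$ and $G[C]$ has both endpoints in $B$ and both in $D$ (since $\{A,B\}$ and $\{C,D\}$ are separations), hence both in $B\cap D$, so $e\in G[B\cap D]$. Therefore $G[A]\cup G[C]\cup G[B\cap D]=G$, so the triple $\{(A,B),(C,D),(B\cap D,A\cup C)\}$ lies in $\cc{T}$, contradicting that $O$ is a $\cc{T}$-tangle.

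The argument is essentially bookkeeping; there is no real obstacle, since the two key computations (that the intersection of the ``large sides'' is again large, and that a covering by $G[A]\cup G[C]\cup G[B\cap D]$ is forced once $\{A,B\}$ and $\{C,D\}$ are separations) have already been spelled out in the discussion preceding the lemma. The proof therefore amounts to collecting those observations and noting that each consistent orientation avoids $\cc{P}$, hence qualifies as a $k$-profile.
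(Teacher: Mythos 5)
Your proof is correct and follows exactly the paper's route: the lemma is stated with its proof deferred to the discussion immediately preceding it, and your two key verifications (that the intersection of the ``large'' sides is again large for $O(b)$, and that a $\cc{P}$-triple inside a tangle forces $G[A]\cup G[C]\cup G[B\cap D]=G$, hence a $\cc{T}$-violation) are precisely the paper's. Two small slips in your consistency checks are worth repairing, though neither affects the argument. For tangles, the pair $\{(A_1,B_1),(B_1,A_1)\}$ you exhibit is never contained in an orientation anyway; the relevant witness for an inconsistent pair $\{(B,A),(C,D)\}$ with $(A,B)\le(C,D)$ is the degenerate ``triple'' $\{(B,A),(C,D),(C,D)\}\in\cc{T}$, which works because $G[B]\cup G[C]\supseteq G[B]\cup G[A]=G$. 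For $O(b)$, your containments run the wrong way: from $(C,D)\in O(b)$ one gets $b\subseteq D\subseteq B$ (not ``$b\subseteq B$ hence $b\subseteq D$''), whence $(A,B)\in O(b)$ and so $(B,A)\notin O(b)$, since $b\not\subseteq A\cap B$ by $|A\cap B|<k\le|b|$.
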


We remark that, unlike in the case of $\cc{T}$, the subset $\cc{P}'$ consisting of just the stars in~$\cc{P}$ yields a wider class of tangles: there are $\cc{P}'$-tangles of $S_k$ that are not $\cc{P}$-tangles, i.e., which are not $k$-profiles.

More generally, if $\ra{S}$ is any separation system contained in some universe~$\ra{U}$, we can define a {\em profile of $S$} to be any $\cc{P}$-tangle of $S$ where
\[
\cc{P} := \big\{ \sigma \subseteq \ra{U} \mid \exists\, \ra{r}, \ra{s} \in \ra{U} \colon\sigma= \{ \ra{r},\ra{s}, \la{r} \wedge \la{s} \} \big\}.
\]
In particular, all $\cc{F}$-tangles with $\cc{F}\supseteq\cc{P}$ will be profiles.

The initial aim of Diestel and Oum in developing their duality theory~\cite{TangleTreeAbstract} had been to find a duality theorem broad enough to imply duality theorems for $k$-blocks and $k$-profiles. Although their theory gave rise to a number of unexpected results~\cite{TangleTreeGraphsMatroids}, a~duality theorem for blocks and profiles was not among these; see~\cite{DiestelOumDualityII} for a summary of their findings on this problem.

Our next goal is to show that their Strong Duality Theorem does implies duality theorems for blocks and profiles after all.

\section{A duality theorem for abstract profiles}\label{s:proof}
In this section we will show that Theorem \ref{t:dual} can be applied to many more types of profiles than originally thought. These will include both $k$-profiles and $k$-blocks in graphs.

We say that a separation system in some universe%
   \footnote{Although submodular separation systems $\ra{S}$ have to lie in some universe $\ra{U}$ in order for $\wedge$ and $\vee$ to be defined on $\ra{S}$ (but with images that may lie in $\ra{U} \setminus \ra{S}$), the choice of $\ra{U}$, given $\ra{S}$, will not matter to us. We shall therefore usually introduce submodular separation systems $\ra{S}$ without formally introducing such a universe $\ra{U} \supseteq \ra{S}$.}
   is \emph{submodular} if for every two of its elements $\ra{r},\ra{s}$ it also contains at least one of $\ra{r} \wedge \ra{s}$ and $\ra{r} \vee \ra{s}$. Given any (submodular) order function on a universe, the separation system
\[
{\ra{S_k}} = \{\ra{r} \in \ra{U} \colon  |\ra{r}| <k \}
\]
is submodular for each $k$. In particular, for any graph $G$, its universe $\ra{U}$ of separations and, for any integer $k \geq 1$, the separation system ${\ra{S_k}}$, is submodular.

We say that a subset $O$ of $U$ is \emph{strongly consistent} if it does not contain both $\la{r}$ and~$\ra{s}$ for any $\ra{r},\ra{s}\in\vS$ with $\vr < \vs$ (but not necessarily $r\ne s$, as in the definition of `consistent'). An orientation $O$ of~$S$, therefore, is strongly consistent if and only if for every $\vs\in O$ it also contains every $\vr\le\vs$ with $r\in S$. In particular, then, $O$~cannot contain any $\vs$ such that $\sv\le\vs$ (i.e., with $\sv$ is small).

Let us call an orientation $O$ of~$S$ {\em regular\/} if it contains all the small separations in~$\vS$.

\begin{lemma}\label{l:small}
An orientation $O$ of a separation system~$\vS$ is strongly consistent if and only if it is consistent and regular.
\end{lemma}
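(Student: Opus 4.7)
The plan is to prove the two implications separately; the lemma is essentially a bookkeeping exercise with the definitions, with the one subtlety being the case $r=s$ in the definition of strong consistency.

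For the forward direction, consistency is a strictly weaker condition than strong consistency (it imposes the same forbidden-pair condition, but only for $r\ne s$), so it follows immediately. For regularity, I take any small $\vs\in\vS$, meaning $\vs\le\la{s}$. If $s$ is degenerate then $\vs=\la{s}$ is the only orientation of $s$ and so lies in $O$ automatically. Otherwise $\vs<\la{s}$ is strict, and I apply strong consistency with $r=s$, setting $\vr:=\vs$ and letting the ``larger'' separation in the definition be $\la{s}$: since $\vr<\la{s}$, the clause says that $O$ cannot contain both $\la{r}=\la{s}$ and $\la{s}$, i.e.\ $\la{s}\notin O$. As $O$ is an orientation, $\vs\in O$.

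For the converse, assume $O$ is consistent and regular, and suppose for contradiction that strong consistency fails, so there are $\vr,\vs\in\vS$ with $\vr<\vs$ and $\la{r},\vs\in O$. If $r\ne s$, this directly contradicts consistency, so $r=s$. Then $\vr$ and $\vs$ are two orientations of the same separation; since $\vr<\vs$ is strict, $\vr=\vs$ is impossible, so $\vr=\la{s}$. Thus $\la{s}=\vr<\vs$, which is exactly the statement that $\la{s}$ is small, and by regularity $\la{s}\in O$. But then both orientations of $s$ belong to $O$, contradicting the fact that an orientation contains exactly one orientation of each separation.

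The main thing to watch out for is the case $r=s$ in the strong consistency condition, where the two named orientations $\vr$ and $\vs$ must be opposite orientations of the same separation, together with the degenerate edge case $\vs=\la{s}$; both points are harmless once explicitly noted, and no further ideas are needed.
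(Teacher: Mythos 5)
Your proof is correct and follows essentially the same route as the paper: the forward direction extracts regularity by applying the strong-consistency clause with $r=s$ to a small separation, and the converse reduces a failure of strong consistency to the case $r=s$, identifies $\vr=\la{s}$ as small, and derives a contradiction with regularity. The explicit treatment of the degenerate case is a harmless extra that the paper leaves implicit.
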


\begin{proof}
Clearly every strongly consistent orientation $O$ is also consistent. Suppose some small $\ra{s}\in\vS$ is not in~$O$. Then $\sv\in O$, since $O$ is an orientation of~$S$. Thus, $\ra{s} < \la{s}\in O$. But this implies $\vs\in O$, since $O$ is strongly consistent, contradicting the choice of~$\vs$. Hence $O$ contains every small separation.

Conversely, suppose $O$ is a consistent orientation of $S$ that is not strongly consistent. Then $O$ contains two distinct oriented separations $\rv$ and $\ra{s}$ such that $\ra{r} < \ra{s}$ and $r=s$.%
   \COMMENT{}
   Thus, $\sv = \vr < \vs$ is small but not in~$O$, as $\vs\in O$. Hence $O$~does not contain all small separations in~$\vS$.
\end{proof}

For example, the $\cc{B}_k$-tangles of~$S_k$ in a graph, as well as its ordinary tangles of order~$k$, are regular by Lemma~\ref{l:small}: they clearly contain all small separations in ${\ra{S_k}}$, those of the form $(A,V)$ with $|A|<k$, since they cannot contain their inverses~$(V,A)$. More generally:

\begin{lemma}\label{l:strong}
For $k > 2$, every $k$-profile in a graph $G=(V,E)$ is regular.
\end{lemma}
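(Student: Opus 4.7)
The plan is to argue by contradiction: assume $O$ is a $k$-profile of $G$ and $(V,A)\in O$ for some small $(A,V)\in\ra{S_k}$ with $|A|<k$. I~will extract a contradiction with either consistency or $\cc P$-avoidance, using $k>2$ to handle the tightest case.

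The main engine is consistency. Whenever $A\subsetneq A'\subseteq V$ with $|A'|<k$, the strict relation $(A,V)<(A',V)$ between distinct separations, together with $(V,A)\in O$, forces $(V,A')\in O$: otherwise the inverse $(V,A)$ and the orientation $(A',V)$ would both sit in~$O$, violating consistency. Applied to two distinct extensions $A_i=A\cup\{v_i\}$ for $v_1,v_2\in V\setminus A$ and $|A|\leq k-2$, both $(V,A_1)$ and $(V,A_2)$ end up in $O$. But $(A_1,V)<(V,A_2)$ is again a strict relation between two distinct separations, so consistency of $O$ forbids having both inverses $(V,A_1),(V,A_2)$ in $O$, a contradiction. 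This settles the case $|A|\leq k-2$ whenever $V\setminus A$ has at least two elements.

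The boundary case $|A|=k-1$ is the main obstacle, because no proper superset of $A$ lies in $\ra{S_k}$ anymore, so the consistency trick above runs out of room. Here the profile axiom $\cc P$ takes over, and this is precisely where $k>2$ enters: it guarantees $|A|\geq 2$, so we may pick distinct $a,a'\in A$. The separations $\{A\setminus\{a\},V\}$ and $\{A\setminus\{a'\},V\}$ have order $k-2$ and hence lie in $S_k$. If $O$ orients both to their small sides, then since $(A\setminus\{a\})\cup(A\setminus\{a'\})=A$ the triple $\{(A\setminus\{a\},V),(A\setminus\{a'\},V),(V,A)\}$ lies in $\cc P\cap O$, contradicting $\cc P$-avoidance. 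Otherwise one of $(V,A\setminus\{a\})$ or $(V,A\setminus\{a'\})$ lies in $O$, and since $|A\setminus\{a\}|=k-2$ and $|V\setminus (A\setminus\{a\})|\geq 2$ (as $|V|\geq k$ whenever $A\neq V$) we are back in the earlier case.

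It remains to handle a few degenerate configurations. If $A=V$, then $(V,A)=(V,V)$ is degenerate and is its own small orientation, so it lies in $O$ trivially. If instead $A\subsetneq V$ but $|V|<k$, then the degenerate $(V,V)$ itself lies in $\ra{S_k}$ and hence in~$O$ automatically; the strict relation $(A,V)<(V,V)$ between distinct separations then yields the consistency violation with $(V,A),(V,V)\in O$ directly.
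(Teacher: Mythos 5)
Your proof is correct and follows the paper's own argument in its essentials: the case $|A|<k-1$ is settled by consistency alone, and the boundary case $|A|=k-1$ by applying $\cc{P}$-avoidance to a triple $\{(A',V),(A'',V),(V,A)\}$ with $A'\cup A''=A$ and $|A'|,|A''|<k-1$, which is exactly where $k>2$ enters. The only differences are cosmetic: you argue by contradiction, implement the consistency step via two single-vertex extensions of $A$ rather than one extension to a set of size $k-1$ lying below both orientations of a single separation, and add explicit (and correct) bookkeeping for the degenerate situations with $|V|<k$ that the paper leaves implicit.
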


\begin{proof}
We have to show that every $k$-profile $O$ in $G$ contains every small separation in ${\ra{S_k}}$. Recall that these are precisely the separations $(A,V)$ of $G$ such that $|A| < k$.

Suppose first that $|A| < k-1$. Let $A'$ be any set such that $|A'|=k-1$ and $A \subset A'$. Then $\{A',V\} \in S_k$, and $(A,V) < (A',V)$ as well as ${(A,V) < (V,A')}$. Since $O$ contains $(A',V)$ or~$(V,A')$, its consistency implies that it also contains~$(A,V)$.

If $|A|=k-1$ then, since $k > 2$, we can pick two non-empty sets $A', A'' \subsetneq A$ such that $A' \cup A'' = A$. Since $|A'|,|A''|<k-1$, by the preceding discussion both $(A',V)$ and $(A'',V)$ lie in $O$. As $(V,A) \wedge (V,A'') = (V \cap V,A' \cup A'') = (V,A)$ and $\{ (A',V),(A'',V),(V,A)\} \in \cc{P}$, the fact that $O$ is a profile implies that $(V,A)\notin O$, so again $(A,V)\in O$ as desired.
\end{proof}

There can be exactly one irregular 1-profile in a graph $G = (V,E)$, and only if $G$ is connected: the set $\{(V,\emptyset)\}$.

Graphs can also have irregular $2$-profiles, but they are easy to characterise. Indeed, consider a $2$-profile $O$ and small separation $(\{x\},V)$. Suppose first that $x$ is a cutvertex of~$G$, in the sense that there exists some $\{A,B\} \in S_2$ such that $A \cap B = \{x\}$ and neither $A$ nor~$B$ equals~$V$.%
   \COMMENT{}
   Then $(\{x\},V) < (A,B)$ and $(\{x\},V) < (B,A)$, so the consistency of $O$ implies that $(\{x\},V) \in O$.

Therefore, if $(V,\{x\}) \in O$ then $x$ is not a cutvertex of $G$. Then, for every other separation $\{A,B\}$, either $x \in A \setminus B$ or $x \in B \setminus A$, and so either $(B,A) < (V,\{x\})$ or $(A,B) < (V,\{x\})$. The consistency of $O$ then determines that 
\[
O = O_x := \{(A,B)\in \ra{S_2} \colon  x \in B \text{ and } (A,B) \neq (\{x\},V) \},
\]
which is indeed a profile.

We have shown that every graph contains, for each of its vertices $x$ that is not a cutvertex, a unique $2$-profile $O_x$ that is not strongly consistent. However, the orientation
\[
O'_x := \{(A,B)\in \ra{S_2} \colon  x \in B \text{ and } (A,B) \neq (V,\{x\}) \}
\]
of $S_2$ is also a $2$-profile which does contain every small separation in $\vS_{\!2}$.%
   \COMMENT{}
   (Indeed, $O'_x = O(b)$ for the unique block~$b$ containing $x$.) Since every graph contains a vertex which is not a cutvertex, it follows that

\begin{lemma}\label{l:two}
Every graph $G$ contains a regular $2$-profile.\qed
\end{lemma}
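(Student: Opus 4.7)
The plan is to read the lemma off from the preceding paragraph. It was shown there that whenever $x$ is a vertex of~$G$ that is not a cutvertex of~$G$ (in the sense that no separation $\{A,B\}\in S_2$ has $A\cap B=\{x\}$ with both $A\ne V$ and $B\ne V$), the orientation
\[
O'_x := \{(A,B)\in \ra{S_2} \colon x\in B \text{ and } (A,B)\ne (V,\{x\})\}
\]
is a $2$-profile of~$S_2$ which contains every small separation in~$\vS_{\!2}$, and is hence regular; indeed, $O'_x = O(b)$ for the unique block~$b$ containing~$x$. Therefore the lemma reduces to the purely graph-theoretic task of exhibiting some non-cutvertex of~$G$.

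For this I would invoke the classical fact that every graph contains a non-cutvertex. The standard proof goes via the block-cutvertex decomposition: pick a leaf block~$B$ of the block-cutvertex tree of some component of~$G$; at most one vertex of~$B$ is a cutvertex of that component (the attachment of~$B$ to the rest of the component, if any), so any other vertex $x\in V(B)$ is a non-cutvertex of the component, and a short additional check shows that such an $x$ remains a non-cutvertex of~$G$ in the sense defined above. For trivial cases like a one-vertex graph, the unique vertex is vacuously a non-cutvertex.

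There is really no substantial obstacle here: all the nontrivial verification—that $O'_x$ is consistent, avoids the profile family~$\cc{P}$, and contains every small separation in~$\vS_{\!2}$—has already been carried out in the preceding discussion. What remains is only the invocation of the classical existence of a non-cutvertex, and then $O'_x$ is the desired regular $2$-profile.
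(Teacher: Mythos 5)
Your proposal matches the paper's own argument: the lemma is stated with \verb|\qed| precisely because the preceding discussion already established that $O'_x$ is a regular $2$-profile for any non-cutvertex $x$, and the paper likewise concludes by invoking the existence of a non-cutvertex in every graph. Your additional sketch of why a non-cutvertex exists (via a leaf block of the block-cutvertex tree) only fills in a classical fact the paper takes for granted, so the two proofs are essentially identical.
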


Lemma~\ref{l:two} means that our goal to find a duality theorem for $k$-profiles in graphs has substance only for~$k > 2$, for which Lemma~\ref{l:strong} tells that all $k$-profiles are regular. In our pursuit of Theorems \ref{t:block} and~\ref{t:prof} it will therefore suffice to study regular $\F$-tangles of submodular separation systems~$\vS$, such as $\ra{S_k}$ for $\F=\P$.

So, until further notice: 
\begin{equation*}
  \begin{minipage}[c]{0.75\textwidth}\em
    Let $\vS$ be any submodular separation system in some universe~$\ra{U}$, and let $\F$ be a subset of~$2^{\ra{S}}$ containing $\cc{P} \cap 2^{\ra{S}}$.
  \end{minipage}
\end{equation*}
Our aim will be to prove a duality theorem for the regular $\cc{F}$-tangles of~$S$. 

It will be instructive to keep in mind, as an example, the case of $k$-blocks, where $\cc{F} = \cc{B}_k$. In this case any triple $\{(A,B),(C,D),(D \cap B, A \cup C)\} \in \cc{P}\cap 2^{\ra{S_k}}$ is contained in~$\cc{B}_k$, as $|B \cap D \cap (A \cup C)| < k$ since $(D \cap B, A \cup C) \in S_k$.

For ease of notation, let us write $\cc{P}_S := \cc{P} \cap 2^{\ra{S}}$, and put $\P_k := \P_{S_k}$ when $U$ is the set of separations of a given graph. Note that an orientation of $S$ avoids $\cc{P}$ if and only if it avoids~$\cc{P}_S$, and an $S$-tree is over $\cc{P}$ if and only if it is over $\cc{P}_S$.

Our first problem is that, in order to apply Theorem \ref{t:dual}, we need $\cc{F}$ to be a set of stars. Since our assumptions about $\cc{F}$ do not require this, our first aim is to turn $\cc{F}$ into a set $\cc{F}^*$ of stars such that the regular $\cc{F}$-tangles of $S$ are precisely its regular $\cc{F}^*$-tangles.

Suppose we have some pair of separations $\ra{x_1}$ and $\ra{x_2}$ which are both contained in some set $\sigma \subseteq \ra{S}$. Since $S$, by assumption, is submodular, at least one of $\ra{x_1} \wedge \la{x_2}$ and $\ra{x_2} \wedge \la{x_1}$ must also be in $\ra{S}$. To \emph{uncross $\ra{x_1}$ and $\ra{x_2}$ in $\sigma$} we replace $\{\ra{x_1},\ra{x_2}\}$ with the pair $\{\ra{x_1} \wedge \la{x_2},\ra{x_2}\}$ in the first case and $\{\ra{x_1},\ra{x_2} \wedge \la{x_1}\}$ in the second case. We note that, in both cases the new pair forms a star and is pointwise $\leq$ the old pair $\{ \ra{x_1}, \ra{x_2}\}$. Uncrossing every pair of separations in $\sigma$ in turn, we can thus turn $\sigma$ into a star $\sigma^*$ of separations in at most ${ |\sigma| \choose 2}$ steps, since any star of two separations remains a star if one of its elements is replaced by a smaller separation, and a set of oriented separations is a star as soon as all its $2$-subsets are stars. Note, however, that $\sigma^*$ will not in general be unique, but will depend on the order in which we uncross the pair of separations in $\sigma$. Let us say that $\cc{F}^*$ is an \emph{uncrossing} of a set  $\cc{F}$ of sets $\sigma \subseteq \ra{S}$ if
\begin{itemize}
\item Every $\tau \in \cc{F}^*$ can be obtained by uncrossing a set $\sigma \in \cc{F}$;
\item For every $\sigma \in \cc{F}$ there is some $\tau \in \cc{F}^*$ that can be obtained by uncrossing~$\sigma$\rlap.
\end{itemize}
Note that~$\F^*$, like $\F$, is a subset of~$2^{\vS}$. Also, $\F^*$~contains all the stars from~$\cc{F}$, since these have no uncrossings other than themselves. In particular, if $\cc{F}$ is standard, i.e. contains all the singleton stars $\{ \la{r}\}$ with $\ra{r}$ trivial in $\ra{S}$, then so is $\cc{F}^*$.

We have shown the following:
\begin{lemma}\label{l:uncross}
$\cc{F}$ has an uncrossing $\cc{F}^*$. If $\cc{F}$ is standard, then so is $\cc{F}^*$.
\end{lemma}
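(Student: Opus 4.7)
The first statement is a bookkeeping step on top of the construction sketched just before the lemma. For each $\sigma\in\cc{F}$, while the current set contains a crossing pair $\{\ra{x_1},\ra{x_2}\}$, submodularity of~$\vS$ supplies at least one of $\ra{x_1}\wedge\la{x_2}$ or $\ra{x_2}\wedge\la{x_1}$ in~$\vS$, so we may replace the crossing pair by the corresponding star-pair that is pointwise~$\leq$ the original. Since a pair of two separations remains a star when one element is replaced by something smaller, and a set is a star as soon as each of its $2$-subsets is, the procedure terminates after at most $\binom{|\sigma|}{2}$ replacements with a star~$\sigma^*$ obtained from~$\sigma$ by uncrossing. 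The plan is to choose, for each $\sigma\in\cc{F}$, one such terminal~$\sigma^*$ and set $\cc{F}^*:=\{\sigma^*:\sigma\in\cc{F}\}$; both defining conditions of `uncrossing' are then immediate from construction.

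For the second statement, suppose $\cc{F}$ is standard, so that $\{\la{r}\}\in\cc{F}$ for every $\ra{r}$ trivial in~$\vS$. A one-element set is already a star, so the uncrossing procedure applied to~$\{\la{r}\}$ has no pair to uncross and terminates at once: the only set obtainable by uncrossing $\{\la{r}\}$ is $\{\la{r}\}$ itself. Hence the element chosen for~$\{\la{r}\}$ in the construction of~$\cc{F}^*$ must be $\{\la{r}\}$, so $\cc{F}^*$ is standard.

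I do not expect a genuine obstacle here: submodularity of~$\vS$ is exactly what legitimises each replacement step, termination is automatic because the number of crossed $2$-subsets strictly decreases, and the result inherits the star-pair property $2$-subset by $2$-subset. The proof is really just packaging, for each $\sigma\in\cc{F}$, one choice of terminal star into the single set~$\cc{F}^*$.
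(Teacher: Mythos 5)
Your overall strategy is the paper's: run the pairwise uncrossing procedure on each $\sigma\in\cc{F}$, collect one terminal star per $\sigma$ to form $\cc{F}^*$, and observe that singletons are already stars so any uncrossing of a standard family is standard. The second half of your argument is correct as written. But the first half contains two inaccuracies that, taken literally, break the argument. First, your loop condition ``while the current set contains a crossing pair'' is the wrong stopping rule: a set with no crossing pairs is merely \emph{nested}, not necessarily a star. Two nested separations may satisfy $\ra{x_1}\le\ra{x_2}$ without satisfying the star relation $\ra{x_1}\le\la{x_2}$, so the procedure as you describe it can halt on a set that is not a star. The replacement step must be applied to \emph{every} $2$-subset of $\sigma$ (the operation $\{\ra{x_1},\ra{x_2}\}\mapsto\{\ra{x_1}\wedge\la{x_2},\ra{x_2}\}$ makes sense and produces a star-pair whether or not the original pair crosses), which is why the paper speaks of uncrossing every pair in turn in at most ${|\sigma|\choose 2}$ steps.

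Second, your termination rationale --- ``the number of crossed $2$-subsets strictly decreases'' --- is not valid. Replacing $\ra{x_1}$ by the smaller separation $\ra{x_1}\wedge\la{x_2}$ can destroy nestedness with a third element: if $\ra{x_3}\le\ra{x_1}$, there is no reason for $\ra{x_3}$ to be comparable with $\ra{x_1}\wedge\la{x_2}$ in any orientation, so a previously nested pair may become crossed and the count of crossings may go up. The invariant that actually does the work (and which you quote but do not use as the driver of the argument) is that the \emph{star} relation is preserved: if $\ra{a}\le\la{b}$ and either element is replaced by something smaller, the pair remains a star. Hence one pass through all ${|\sigma|\choose 2}$ pairs, in any fixed order, leaves every $2$-subset a star, and a set all of whose $2$-subsets are stars is a star. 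With the loop restructured as ``for each $2$-subset once'' rather than ``while some pair crosses'', and with persistence of star-pairs replacing the false monovariant, your proof matches the paper's.
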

The smaller we can take $\cc{F}^*$ to be, the smaller will be the class of $S$-trees over $\cc{F}^*$. However, to make $\cc{F}^*$ as small as possible we would have to give it exactly one star $\tau$ for each $\sigma \in \cc{F}$, which would involve making a non-canonical choice with regards to the order in which we uncross $\sigma$, and possibly which of the two potential uncrossings of a given pair of separations we select.

If we wish for a more canonical choice of family, we can take $\cc{F}^*$ to consist of every star that can be obtained by uncrossing a set in $\cc{F}$ in any order. Obviously, this will come at the expense of increasing the class of $S$-trees over $\cc{F}$, i.e. the class of dual objects in our desired duality theorem. 

\begin{lemma}\label{l:*tangle}
Let $\cc{F}^*$ be an uncrossing of~$\cc{F}$. Then an orientation $O$ of $S$ is a regular $\cc{F}$-tangle if and only if it is a regular $\cc{F}^*$-tangle.
\end{lemma}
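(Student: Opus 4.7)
The plan is to run two separate inductions along a sequence of one-step uncrossings $\sigma = \tau_0, \tau_1, \ldots, \tau_n = \tau$, where $\sigma \in \cc{F}$, $\tau \in \cc{F}^*$, and each $\tau_{j+1}$ is obtained from $\tau_j$ by replacing some pair $\{\ra{x}, \ra{y}\} \subseteq \tau_j$ by $\{\ra{x} \wedge \la{y}, \ra{y}\}$ (or the symmetric variant), with $\ra{x} \wedge \la{y} \in \vS$. By construction each $\tau_j$ is a subset of $\vS$. A preliminary observation, valid in both directions, is that a regular $\cc{F}$- or $\cc{F}^*$-tangle $O$ is strongly consistent by Lemma~\ref{l:small}: whenever $\ra{r} \leq \ra{s}$ with $\ra{s} \in O$ we also have $\ra{r} \in O$.

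For the implication ``regular $\cc{F}^*$-tangle $\Rightarrow$ regular $\cc{F}$-tangle'', I would suppose for contradiction that some $\sigma \in \cc{F}$ lies in $O$ and show by forward induction on $j$ that $\tau_j \subseteq O$, producing a terminal $\tau \in \cc{F}^*$ inside $O$. The inductive step is immediate: the new element $\ra{x} \wedge \la{y}$ satisfies $\ra{x} \wedge \la{y} \leq \ra{x} \in O$, so strong consistency places it in $O$ and hence $\tau_{j+1} \subseteq O$.

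The converse ``regular $\cc{F}$-tangle $\Rightarrow$ regular $\cc{F}^*$-tangle'' is where the hypothesis $\cc{F} \supseteq \cc{P}_S$ earns its keep. Suppose some $\tau \in \cc{F}^*$ lies in $O$; fix an uncrossing sequence producing $\tau$ from some $\sigma \in \cc{F}$ and argue by reverse induction on $j$ that $\tau_j \subseteq O$, ending at $\sigma \subseteq O$ for the contradiction. The delicate step passes from $\tau_{j+1} \subseteq O$ back to $\tau_j \subseteq O$: we have $\ra{x} \wedge \la{y}, \ra{y} \in O$ and must conclude $\ra{x} \in O$. Supposing instead that $\la{x} \in O$, the triple $\{\la{x}, \ra{y}, \ra{x} \wedge \la{y}\}$ lies in $\cc{P}$ (take $\ra{a} = \la{x}$, $\ra{b} = \ra{y}$, and note $\la{a} \wedge \la{b} = \ra{x} \wedge \la{y}$), has all three entries in $\vS$, hence lies in $\cc{P}_S \subseteq \cc{F}$, and is contained in $O$, contradicting that $O$ avoids $\cc{F}$. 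The main obstacle is exactly this backward step: strong consistency only lets us travel downward in the order, which suffices for the forward induction where uncrossed elements are smaller, but recovering the larger predecessor $\ra{x}$ from its replacement $\ra{x} \wedge \la{y}$ cannot be done by consistency alone, and the profile triples guaranteed by $\cc{P}_S \subseteq \cc{F}$ are precisely what supplies the extra leverage.
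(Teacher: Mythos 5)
Your proposal is correct and follows essentially the same route as the paper's proof: the forward direction uses strong consistency (via Lemma~\ref{l:small}) and the fact that uncrossing only produces pointwise smaller separations, and the backward direction recovers $\ra{x}$ from $\ra{x}\wedge\la{y}$ exactly as the paper does, by exhibiting the triple $\{\la{x},\ra{y},\ra{x}\wedge\la{y}\}\in\cc{P}_S\subseteq\cc{F}$. The only difference is presentational (you phrase the converse as backward propagation of containment rather than forward preservation of avoidance, which is its contrapositive).
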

\begin{proof}
Let us first show that if $O$ is a regular $\cc{F}^*$-tangle then it is a regular $\cc{F}$-tangle. It is sufficient to show that $O$ avoids $\cc{F}$. Suppose for a contradiction that there is some $\sigma = \{\ra{x_1},\ra{ x_2}, \ldots, \ra{x_n}\} \in \cc{F}$ such that $\sigma \subseteq O$. Since $\cc{F}^*$ is an uncrossing of $\cc{F}$ there is some $\tau = \{\ra{u_1},\ra{u_2}, \ldots, \ra{u_n}\}  \in \cc{F}^*$ that is an uncrossing of $\sigma$. Then, $\ra{u_i} \leq \ra{x_i} \in O$ for all $i$. Since $O$ is strongly consistent, by Lemma~\ref{l:small}, this implies $\ra{u_i} \in O$ for each $i$. Therefore $\tau \subseteq O$, contradicting the fact that $O$ avoids $\cc{F}^*$.

Conversely suppose $O$ is a regular $\cc{F}$-tangle. We would like to show that $O$ avoids $\cc{F}^*$. To do so, we will show that, if $O$ avoids some set $\sigma = \{\ra{x_1},\ra{x_2}, \ldots , \ra{x_n}\}$ then it also avoids the set $\sigma' = \{\ra{x_1} \wedge \la{x_2},\ra{x_2} \ldots , \ra{x_n}\}$ obtained by uncrossing the pair $\ra{x_1},\ra{x_2}$. Then by induction $O$ must also avoid every star obtained by uncrossing a set in $\cc{F}$, and thus will avoid $\cc{F}^*$.

Suppose then that $O$ avoids $\sigma$ but $\sigma' \subseteq O$. Since $x_1 \in S$ either $\ra{x_1}$ or $\la{x_1}$ lies in $O$. As $\sigma\setminus \{\ra{x_1}\} = \{ \ra{x_2}, \ra{x_3}, \ldots , \ra{x_n}\} \subseteq \sigma' \subseteq O$, but $O$ avoids $\sigma$, we have $\ra{x_1} \not\in O$ and hence $\la{x_1} \in O$. But then $O$ contains the triple $\{\la{x_1}, \ra{x_2}, \ra{x_1} \wedge \la{x_2} \} \in \cc{P}_S \subseteq \cc{F}$. This contradicts the fact that $O$ avoids $\cc{F}$.
\end{proof}

Lemma~\ref{l:*tangle} has an interesting corollary. Suppose, that, in a graph, every star of separations in some given consistent orientation~$O$ of ${S_k}$ points to some $k$-block. Is there {\em one\/} $k$-block to which all these stars~-- and hence every separation in~$O$~-- point?%
   \COMMENT{}
   This is indeed the case:

\begin{corollary}
If every star of separations in some strongly consistent orientation of~$S$ is contained in some profile of~$S$, then there exists one profile of~$S$ that contains all these stars.
\end{corollary}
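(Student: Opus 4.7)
The plan is to show that the given strongly consistent orientation $O$ of $S$ is itself a profile of $S$; $O$ then visibly contains every star $\sigma\subseteq O$. Since $O$ is consistent (strong consistency implies consistency, by Lemma~\ref{l:small}), the task reduces to checking that $O$ avoids $\cc{P}_S$.

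Suppose for contradiction that some triple $\tau=\{\vr,\vs,\rv\wedge\sv\}\in\cc{P}_S$ lies in $O$. Since $\tau$ need not be a star, the first step is to replace it by a nearby star in $O$. Submodularity of $\vS$ guarantees that at least one of the corners $\vr\wedge\sv$ or $\vs\wedge\rv$ lies in $\vS$; assume by symmetry the former, and set $\tau':=\{\vr\wedge\sv,\vs,\rv\wedge\sv\}$. A short check of the three pairwise inequalities ($\vr\wedge\sv\le\sv$, $\vr\wedge\sv\le\vr\vee\vs$, $\vs\le\vr\vee\vs$) shows that $\tau'$ is a star, and since each of its elements sits below an element of $\tau\subseteq O$, strong consistency of $O$ yields $\tau'\subseteq O$.

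Now feed the star $\tau'\subseteq O$ into the hypothesis: there is a profile $P$ of $S$ with $\tau'\subseteq P$. Case-split on how $P$ orients $r$. If $\vr\in P$, then all of $\tau\subseteq P$, contradicting that $P$ avoids $\cc{P}_S$. If $\rv\in P$, then $\{\rv,\vs,\vr\wedge\sv\}\subseteq P$; but taking $\vr':=\rv$ and $\vs':=\vs$ in the definition of $\cc{P}$ gives $\la{r'}\wedge\la{s'}=\vr\wedge\sv$, so this triple also lies in $\cc{P}_S$ and again contradicts $P$ being a profile. Either way we reach a contradiction, so $O$ avoids $\cc{P}_S$ and is itself a profile.

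The chief obstacle, and the reason the uncrossing step is unavoidable, is that the hypothesis supplies profiles only for \emph{stars} contained in $O$, whereas $\tau$ is typically not a star. The crux is therefore to find a star in $O$ sufficiently close to $\tau$ that any profile containing it is forced, once it commits to an orientation of $r$, to contain a $\cc{P}_S$-triple. Submodularity of $\vS$ keeps the necessary corner inside $\vS$, and strong consistency of $O$ transports it into $O$; both ingredients are essential.
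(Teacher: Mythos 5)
Your proof is correct and is essentially the paper's argument: the paper simply applies Lemma~\ref{l:*tangle} with $\F=\P_S$ (each star of $O$ lies in a profile, so $O$ avoids $\P_S^*$, hence avoids $\P_S$ and is itself a profile), and your uncrossing-plus-case-split is exactly the content of that lemma unpacked for $\P_S$-triples, with one direction applied to $O$ and the other to the profile $P$.
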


\begin{proof}
In Lemma~\ref{l:*tangle}, take $\F:= \P_S$. An orientation $O$ of~$S$ whose stars each lie in a profile of~$S$ cannot contain a star from~$\P^*_S$. But if $O$ is regular, consistent, and avoids~$\P^*_S$, then by Lemma~\ref{l:*tangle} it also avoids~$\P_S$ and hence is a $\P_S$-tangle.%
   \COMMENT{}
\end{proof}

Before we can apply Theorem \ref{t:dual} to our newly found set $\cc{F}^*$ of stars, we have to overcome another problem: $\ra{S}$ may fail to be $\cc{F}^*$-separable. To address this problem, let us briefly recall what it means for a family to be closed under shifting. Suppose we have a a pair of separations $\ra{r} \leq \ra{s}$ such that $\ra{r}$ is nontrivial, nondegenerate, and not forced by~$\F$. Suppose further that $\ra{s}$ emulates~$\ra{r}$ in~$\vS$, and that we have a star $\tau = \{\ra{x_1}, \ra{x_2}, \ldots , \ra{x_n} \} \subseteq \ra{S}_{\geq \ra{r}} \setminus \{\la{r}\}$ that contains some separation $\ra{x_1} \geq \ra{r}$. Then the image $\tau'$ of $\tau$ under $f\!\!\downarrow^{\ra{r}}_{\ra{s}}$ is
\[
\tau' = \{ \ra{x_1} \vee \ra{s}, \ra{x_2} \wedge \la{s}, \ldots, \ra{x_n} \wedge \la{s}\},
\]
where the fact that $\ra{s}$ emulates~$\ra{r}$ guarantees that $\tau' \subseteq \ra{S}$. Let us call $\tau'$ a \emph{shift of $\tau$}, and more specifically the \emph{shift of $\tau$ from $\ra{r}$ to $\ra{s}$}. (See~\cite{TangleTreeAbstract} for why this is well defined.)


For a family $\cc{F}$ to be closed under shifting it is sufficient that it contains all shifts of its elements: that for every $\tau \in \cc{F}$, every $\ra{x_1} \in \tau$, every nontrivial and nondegenerate $\ra{r} \leq \ra{x_1}$ not forced by~$\F$, and every $\ra{s}$ emulating~$\ra{r}$, the shift of $\tau$ from $\ra{r}$ to $\ra{s}$ is in $\cc{F}$.

The idea for making Theorem \ref{t:dual} applicable to $\cc{F}^*$ will be to close $\cc{F}^*$ by adding any missing shifts. Let us define a family $\hat{\cc{F}^*}$ as follows: Let $\cc{G}_0 = \cc{F}^*$, define $\cc{G}_{n+1}$ inductively as the set of shifts of elements of $\cc{G}_n$, and put $\hat{\cc{F}^*} := \bigcup_n \cc{G}_n$. Clearly $\hat{\cc{F}^*}$ is closed under shifting.

Next, let us show that a strongly consistent orientation of~$S$ avoids $\cc{F}^*$ if and only if it avoids $\hat{\cc{F}^*}$. We first note the following lemma.

\begin{lemma}\label{l:shift}
Let $O$ be a regular $\cc{P}$-tangle of $S$.%
   \COMMENT{}
   Let $\sigma \subseteq \ra{S}$ be a star, and let $\sigma'$ be a shift of $\sigma$ from some $\ra{r}$ to some $\ra{s} \in \ra{S}$. Then $\sigma' \subseteq O$ implies that $\sigma \subseteq O$.
\end{lemma}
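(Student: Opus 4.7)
The plan is to peel off the two types of elements appearing in $\sigma'$ separately: the single distinguished element $\ra{x_1}\vee\ra{s}$ (image of the unique member of $\sigma$ that is $\ge\ra{r}$), and the remaining elements of the form $\ra{x_i}\wedge\la{s}$ for $i\ge 2$. The first type will be handled by strong consistency; the second will require the profile axiom.

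First I would observe that, by Lemma~\ref{l:small}, being a regular $\cc{P}$-tangle makes $O$ strongly consistent, so $\ra{v}\in O$ and $\ra{u}\le\ra{v}$ with $u\in S$ imply $\ra{u}\in O$. Now $\ra{x_1}\vee\ra{s}\in\sigma'\subseteq O$, and both $\ra{s}$ and $\ra{x_1}$ lie below $\ra{x_1}\vee\ra{s}$ in $\vS$; strong consistency therefore gives $\ra{s}\in O$ and $\ra{x_1}\in O$. This already handles the distinguished element of $\sigma$.

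For $i\ge 2$, recall that since $\sigma$ is a star containing $\ra{x_1}\ge\ra{r}$, every other $\ra{x_i}$ satisfies $\ra{x_i}\le\la{x_1}\le\la{r}$, so the orientation of $x_i$ that is $\ge\ra{r}$ is $\la{x_i}$; this is why $f\!\!\downarrow^{\ra{r}}_{\ra{s}}(\ra{x_i})=(\la{x_i}\vee\ra{s})^*=\ra{x_i}\wedge\la{s}$ by De Morgan. Suppose for a contradiction that $\ra{x_i}\notin O$; then $\la{x_i}\in O$. Setting $\ra{a}:=\la{x_i}$ and $\ra{b}:=\ra{s}$, the triple
\[
\{\la{x_i},\ \ra{s},\ \ra{x_i}\wedge\la{s}\}=\{\ra{a},\ \ra{b},\ \la{a}\wedge\la{b}\}
\]
lies in $\cc{P}_S$. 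But each of its three members is in $O$ (the last by hypothesis $\sigma'\subseteq O$, the first by our assumption for contradiction, and the second by the previous paragraph), contradicting the fact that the $\cc{P}$-tangle $O$ avoids $\cc{P}$. Hence $\ra{x_i}\in O$, completing the proof that $\sigma\subseteq O$.

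The main step is the $i\ge 2$ case: strong consistency alone cannot pull $\ra{x_i}$ back out of $\ra{x_i}\wedge\la{s}$ (the inequality goes the wrong way), and this is exactly where the profile axiom is needed. Everything else is bookkeeping around the shift formula and the Lemma~\ref{l:small} equivalence between strong consistency and the conjunction of consistency and regularity.
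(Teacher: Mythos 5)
Your proof is correct and follows essentially the same route as the paper's: strong consistency (via Lemma~\ref{l:small}) recovers $\ra{x_1}$ and $\ra{s}$ from $\ra{x_1}\vee\ra{s}$, and for $i\ge 2$ the triple $\{\la{x_i},\ra{s},\ra{x_i}\wedge\la{s}\}\in\cc{P}_S$ rules out $\la{x_i}\in O$. You merely make explicit the identification of that triple with the form $\{\ra{a},\ra{b},\la{a}\wedge\la{b}\}$, which the paper leaves implicit.
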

\begin{proof}
Let $\sigma = \{\ra{x_1},\ra{x_2}, \ldots, \ra{x_n}\}$, with $\ra{r} \leq \ra{x_1}$. Then $\ra{x_1} \vee \ra{s} \in \sigma' \subseteq O$. Since $O$ is strongly consistent, this implies that $\ra{x_1}$ and $\ra{s}$ lie in $O$. Also, for any $i \geq 2$, as $x_i \in S$, either $\ra{x_i}$ or $\la{x_i}$ lies in $O$. However, since $\ra{s} \in O$ and $\ra{x_i} \wedge \la{s} \in \sigma' \subseteq O$, and $O$ avoids $\cc{P}$, it cannot be the case that $\la{x_i} \in O$. Hence $\ra{x_i} \in O$ for all $i \geq 2$, and so $\sigma \subseteq O$.
\end{proof}

\begin{lemma}\label{l:close}
Let  $\cc{F}^*$ be an uncrossing of~$\cc{F}$. Then an orientation $O$ of $S$ is a regular $\hat{\cc{F}^*}$-tangle if and only if it is a regular $\cc{F}$-tangle.
\end{lemma}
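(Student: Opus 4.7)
\medskip

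\noindent\textbf{Proof proposal for Lemma~\ref{l:close}.}
The plan is to reduce to Lemma~\ref{l:*tangle} for one direction, and to iterate Lemma~\ref{l:shift} for the other. Throughout we shall use that, since $\F \supseteq \P_S$ by our standing assumption, every regular $\F$-tangle is in particular a regular $\P_S$-tangle, and hence a regular $\P$-tangle of~$S$; by Lemma~\ref{l:small} it is also strongly consistent. Thus Lemma~\ref{l:shift} will be available to us at every step.

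For the easy direction, suppose $O$ is a regular $\hat{\F^*}$-tangle. Since $\F^* = \cc{G}_0 \subseteq \hat{\F^*}$, the orientation $O$ avoids $\F^*$, so $O$ is a regular $\F^*$-tangle, and Lemma~\ref{l:*tangle} then tells us that $O$ is a regular $\F$-tangle.

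For the converse, suppose $O$ is a regular $\F$-tangle. By Lemma~\ref{l:*tangle} it is also a regular $\F^*$-tangle, so $O$ avoids $\cc{G}_0 = \F^*$. We show by induction on $n$ that $O$ avoids every $\cc{G}_n$, which gives $O \cap \hat{\F^*} = \emptyset$ as required. For the inductive step, assume that $O$ avoids $\cc{G}_n$, and consider any $\sigma' \in \cc{G}_{n+1}$. By construction, $\sigma'$ is a shift of some star $\sigma \in \cc{G}_n$, say from some nontrivial nondegenerate $\ra{r}$ not forced by~$\F$ to some $\ra{s}$ emulating $\ra{r}$ in~$\vS$. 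If $\sigma' \subseteq O$, then Lemma~\ref{l:shift}, applied to the regular $\P$-tangle $O$, yields $\sigma \subseteq O$, contradicting the inductive hypothesis. Hence $\sigma' \not\subseteq O$, completing the induction.

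The main obstacle I expect is purely bookkeeping: verifying that each $\sigma \in \cc{G}_n$ reached in the induction really is a star to which Lemma~\ref{l:shift} applies (all $\cc{G}_n$ consist of stars by construction, since uncrossings are stars and shifts of stars are again stars), and that the hypotheses of Lemma~\ref{l:shift} are met by any regular $\F$-tangle (which reduces to noting $\F \supseteq \P_S$ and invoking Lemma~\ref{l:small}). Once these points are recorded, no additional computation is needed.
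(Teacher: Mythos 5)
Your proof is correct and follows essentially the same route as the paper: Lemma~\ref{l:*tangle} plus $\F^*\subseteq\hat{\F^*}$ for the forward direction, and an induction on the levels $\cc{G}_n$ using Lemma~\ref{l:shift} (justified by $\F\supseteq\P_S$ and Lemma~\ref{l:small}) for the converse. Nothing is missing.
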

\begin{proof}
Recall that $O$ is a regular $\cc{F}$-tangle if and only if it is a regular $\cc{F}^*$ tangle (Lemma \ref{l:*tangle}). Clearly every regular $\hat{\cc{F}^*}$-tangle also avoids  $\cc{F}^*$, and hence is also a regular $\cc{F}^*$-tangle, and $\cc{F}$-tangle.

Conversely, every regular $\cc{F}$-tangle avoids both $\cc{F}^* = \cc{G}_0$ (Lemma~\ref{l:*tangle}) and hence, by Lemma \ref{l:shift} and $\cc{F} \supseteq \cc{P}_S$, also $\cc{G}_1$. Proceeding inductively we see that $O$ avoids $\cc{G}_n$ for each $n$, and so avoids $\bigcup_n \cc{G}_n = \hat{\cc{F}^*}$. Hence $O$ is a regular $\hat{\cc{F}^*}$-tangle.
\end{proof}

Before we can, at last, apply Theorem \ref{t:dual} to our set $\hat{\cc{F}^*}$, we have to make one final adjustment: Theorem \ref{t:dual} requires its set $\cc{F}$ of stars to be standard, i.e., to contain all singletons stars $\{\ra{r}\}$ with $\la{r}$ trivial in $\ra{S}$. Since trivial separations are small, it will suffice to add to $\hat{\cc{F}^*}$ all singleton stars $\{\ra{x}\}$ such that $\la{x}$ is small; we denote the resulting superset of $\hat{\cc{F}^*}$ by $\overline{\cc{F}^*}$. Then $\overline{\F^*}$-tangles contain all small separations, so they are precisely the regular $\hat\F^*$-tangles.%
   \COMMENT{}

Clearly, $\overline{\cc{F}^*}$ is a standard set of stars. Also, the shift of any singleton star $\{\ra{x}\}$ is again a singleton star $\{\ra{y}\}$ such that $\ra{x} \leq \ra{y}$. Moreover, if $\la{x}$ is small then so is $\la{y} \leq \la{x}$, so if $\{\ra{x}\}$ lies in $\overline{\cc{F}^*}$ then so does $\{\ra{y}\}$. Therefore, since $\hat{\cc{F}^*}$ is closed under shifting, $\overline{\cc{F}^*}$ too is closed under shifting.  Hence, we get the following duality theorem for abstract profiles:

\begin{theorem}\label{t:main}
Let $\ra{S}$ be a separable%
   \footnote{Whilst the assumption that $\ra{S}$ is separable is necessary to apply Theorem \ref{t:dual}, in a forthcoming paper \cite{AbstractTangles} the authors and Wei{\ss}auer show that every submodular separation system is in fact separable, and so this asssumption can be removed from Theorem \ref{t:main}.}
submodular separation system in some universe of separations, let $\cc{F}\subseteq 2^{\ra{S}}$ contain~$\cc{P}_S$, and let $\cc{F}^*$ be any uncrossing of~$\cc{F}$. Then the following are equivalent:
\goodbreak
\begin{itemize}
\item There is no regular $\cc{F}$-tangle of $S$;
\item There is no $\overline{\cc{F}^*}$-tangle of $S$;
\item There is an $S$-tree over $\overline{\cc{F}^*}$.
\end{itemize}
\end{theorem}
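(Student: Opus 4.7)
My plan is to assemble the pieces already developed---the uncrossing reduction (Lemma~\ref{l:*tangle}), the shift-closure step (Lemma~\ref{l:close}), and the remarks just before the theorem---so that the Strong Duality Theorem~\ref{t:dual} can be applied to $\overline{\cc{F}^*}$. For the equivalence of the first two bullets I would observe that $\overline{\cc{F}^*}$-tangles coincide with regular $\hat{\cc{F}^*}$-tangles: the extra singletons $\{\ra{x}\}$ (with $\la{x}$ small) force any $\overline{\cc{F}^*}$-tangle to contain $\la{x}$ for every such $x$, and hence be regular, while a regular $\hat{\cc{F}^*}$-tangle already contains $\la{x}$ and so avoids each added singleton $\{\ra{x}\}$. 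Combining this with Lemma~\ref{l:close} then yields the first equivalence.

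The second equivalence is a direct application of Theorem~\ref{t:dual} to $\overline{\cc{F}^*}$ in the role of $\cc{F}$. The two hypotheses to be checked are that $\overline{\cc{F}^*}$ is a standard set of stars and that $\ra{S}$ is $\overline{\cc{F}^*}$-separable. Both pieces are essentially already noted in the paragraph preceding the theorem: $\overline{\cc{F}^*}$ is standard because every trivial separation is small and so contributes an appropriate singleton, and $\overline{\cc{F}^*}$-separability reduces to closure under shifting via the two-step strategy of Section~\ref{s:background}, since $\ra{S}$ is separable by hypothesis. Closure under shifting holds because $\hat{\cc{F}^*}$ is closed by construction, and shifting a singleton $\{\ra{x}\}$ produces a singleton $\{\ra{y}\}$ with $\ra{y}\geq\ra{x}$, so $\la{y}\leq\la{x}$ remains small.

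The heavy lifting will already have been done in Lemmas~\ref{l:*tangle} and~\ref{l:close}: the uncrossing step depends essentially on the profile axiom $\cc{P}_S\subseteq\cc{F}$, which is what allows the forbidden triple $\{\la{x_1},\ra{x_2},\ra{x_1}\wedge\la{x_2}\}$ to be detected inside an $\cc{F}$-tangle, and the shift-closure step relies on regularity (via Lemma~\ref{l:small}) together with the profile axiom through Lemma~\ref{l:shift}. With those in place, the proof of Theorem~\ref{t:main} reduces to the bookkeeping described above, with the only nontrivial remaining check being that every relevant construction survives the enlargement $\hat{\cc{F}^*}\mapsto\overline{\cc{F}^*}$---which it does by the small-separation argument outlined above.
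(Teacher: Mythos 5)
Your proposal is correct and follows essentially the same route as the paper: Lemmas~\ref{l:*tangle}--\ref{l:close} identify regular $\cc{F}$-tangles with regular $\hat{\cc{F}^*}$-tangles, the added singletons identify these with $\overline{\cc{F}^*}$-tangles, and Theorem~\ref{t:dual} applied to the standard, shift-closed star set $\overline{\cc{F}^*}$ gives the second equivalence. The details you supply (regularity forced by the added singletons, closure under shifting of singleton stars) are exactly the remarks the paper records immediately before the theorem.
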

\begin{proof}
By Lemmas \ref{l:*tangle} to \ref{l:close} the regular $\cc{F}$-tangles of $S$ are precisely its regular $\hat{\cc{F}^*}$-tangles, and by Lemma \ref{l:small} these are precisely its $\overline{\cc{F}^*}$-tangles. Hence the first two statements are equivalent. Since $\overline{\cc{F}^*}$ is a standard set of stars which is closed under shifting, Theorem \ref{t:dual} implies that the second two statements are equivalent.
\end{proof}


\section{Duality for special tangles, blocks, and profile\rlap s}\label{s:apply}

Let us now apply Theorem~\ref{t:main} to prove Theorems~\ref{t:Ftangle}--\ref{t:prof} from the Introduction. 
We shall first state the latter two results by specifying~$\F$, and then deduce their \td\ formulations as in Theorems~\ref{t:block} and~\ref{t:prof}, along with Theorem~\ref{t:Ftangle}. Recall that for any graph the set $\ra{S_k}$ of separations of order $<k$ is a separable submodular separation system (see \cite{TangleTreeGraphsMatroids}).

\begin{theorem}\label{t:blocks}
For every finite graph $G$ and every integer $k > 0$ exactly one of the following statements holds:
\begin{itemize}\itemsep=0pt
\item $G$ contains a $k$-block;
\item $G$ has an $S_k$-tree over~$\overline{\B_k^*}$, where $\B_k^*$ is any uncrossing of~$\B_k$.
\end{itemize}
\end{theorem}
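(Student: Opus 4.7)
The plan is to instantiate Theorem~\ref{t:main} with the separation system $\ra{S_k}$ of separations of~$G$ of order $<k$, and $\F := \B_k$. First I would verify the hypotheses. The system $\ra{S_k}$ is submodular because $|\cdot|$ is a submodular order function, and it is separable by the result of~\cite{TangleTreeGraphsMatroids} recalled at the start of this section. The required inclusion $\B_k \supseteq \P_{S_k}$ is the observation already made in the excerpt: every triple $\{(A,B),(C,D),(B\cap D,A\cup C)\} \in \P_{S_k}$ satisfies $|B\cap D\cap (A\cup C)| < k$ because $(B\cap D, A\cup C) \in S_k$, so the triple lies in~$\B_k$.

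With these hypotheses in hand, Theorem~\ref{t:main} applied to $\F = \B_k$ and any uncrossing $\B_k^*$ of~$\B_k$ yields the equivalence between the non-existence of a regular $\B_k$-tangle of~$S_k$ and the existence of an $S_k$-tree over $\overline{\B_k^*}$, with both statements being mutually exclusive.

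The remaining step is to identify the regular $\B_k$-tangles of~$S_k$ with the $k$-blocks of~$G$. The excerpt already sets up the bijection $b \mapsto O(b) = \{(A,B) \in \ra{S_k} \colon b \subseteq B\}$ between $k$-blocks and $\B_k$-tangles, with inverse $O \mapsto \bigcap\{B \colon (A,B) \in O\}$. What I would additionally note is that every $O(b)$ is automatically regular: the small separations of $\ra{S_k}$ have the form $(A,V)$, and $b \subseteq V$ is trivial, so $(A,V) \in O(b)$. Thus regular $\B_k$-tangles of $S_k$ are exactly the orientations induced by $k$-blocks of~$G$, and Theorem~\ref{t:blocks} follows directly from Theorem~\ref{t:main}.

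The main obstacle: there is essentially none, since Theorem~\ref{t:main} is the engine and the graph-theoretic correspondence between $k$-blocks and $\B_k$-tangles was already developed in Section~\ref{s:prof}. The only mild subtlety is that the bijection produces $\B_k$-tangles, not a priori regular ones, so the (trivial) verification that each $O(b)$ contains every small separation is what lets us drop the regularity qualifier and translate the conclusion into the statement that $G$ contains no $k$-block.
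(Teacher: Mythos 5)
Your proposal is correct and follows essentially the same route as the paper: instantiate Theorem~\ref{t:main} with $S=S_k$ and $\F=\B_k$ (the paper formally takes $\F=\B_k\cap 2^{\ra{S_k}}$ so that $\F\subseteq 2^{\ra{S}}$, an immaterial adjustment), using the correspondence $b\mapsto O(b)$ from Section~\ref{s:prof} together with the observation that each $O(b)$ is regular. Your explicit check of the inclusion $\P_{S_k}\subseteq\B_k$ and of regularity just spells out what the paper notes in passing earlier in the text.
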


\begin{proof}
In Theorem~\ref{t:main}, let $S = S_k$ be the set of separations of order~$<k$ in~$G$, and let $\cc{F} := \cc{B}_k \cap 2^{\ra{S_k}}$. Then $\ra{S_k}$ is a separable submodular separation system in the universe of all separations of~$G$, and the regular $\F$-tangles in $G$ are precisely the orientations~$O(b)$ of~$S_k$ for $k$-blocks $b$ in~$G$. Hence $G$ has a $k$-block if and only if it has a regular $\F$-tangle for this~$\F$. The assertion now follows from Theorem~\ref{t:main}.
\end{proof}

As for profiles, every graph $G$ has a regular 1-profile~-- just orient every 0-separation towards some fixed component~--%
   \COMMENT{}
   and a regular 2-profile (Lemma~\ref{l:two}). So we need a duality theorem only for $k > 2$. Recall that, for $k>2$, all $k$-profiles of graphs are regular (Lemma~\ref{l:strong}).

\begin{theorem}\label{t:profs}
For every finite graph $G$ and every integer $k > 2$ exactly one of the following statements holds:
\begin{itemize}\itemsep=0pt\vskip-\smallskipamount\vskip0pt
\item $G$ has a $k$-profile;
\item $G$ has an $S_k$-tree over~$\overline{\P_k^*}$, where $\P^*_k$ is any uncrossing of~$\P_k$.
\end{itemize}
\end{theorem}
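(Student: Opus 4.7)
The plan is to mirror the proof of Theorem~\ref{t:blocks} word for word, with $\B_k$ replaced by $\P_k$; the work has essentially all been done by Theorem~\ref{t:main} and Lemma~\ref{l:strong}, and what remains is only to check that the hypotheses of Theorem~\ref{t:main} apply to $\F = \P_k$ and that regular $\P_k$-tangles of $S_k$ are exactly the $k$-profiles of $G$.

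First I would set $S := S_k$ and $\F := \P_k = \P \cap 2^{\ra{S_k}}$. By the remark quoted from \cite{TangleTreeGraphsMatroids} immediately before Theorem~\ref{t:blocks}, $\ra{S_k}$ is a separable submodular separation system inside the universe $\ra{U}$ of all oriented separations of~$G$. Since $\P_S = \P \cap 2^{\ra{S_k}} = \P_k$, the inclusion $\F \supseteq \P_S$ required by Theorem~\ref{t:main} holds trivially (with equality). Hence Theorem~\ref{t:main} applies and tells us that the non-existence of a regular $\P_k$-tangle of $S_k$ is equivalent to the existence of an $S_k$-tree over $\overline{\P_k^*}$, for any uncrossing $\P_k^*$ of $\P_k$.

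Next I would check that regular $\P_k$-tangles of $S_k$ are exactly the $k$-profiles of~$G$. By the definition in Section~\ref{s:prof}, a $k$-profile in~$G$ is a $\P$-tangle of~$S_k$, and such an orientation avoids $\P$ if and only if it avoids $\P_k$, so $k$-profiles of $G$ and $\P_k$-tangles of $S_k$ coincide. Since $k > 2$, Lemma~\ref{l:strong} guarantees that every $k$-profile is regular, so the regular $\P_k$-tangles of $S_k$ are precisely the $k$-profiles of~$G$.

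Combining these two observations yields the desired dichotomy: $G$ has a $k$-profile if and only if $S_k$ admits a regular $\P_k$-tangle, which by Theorem~\ref{t:main} is the case if and only if there is no $S_k$-tree over $\overline{\P_k^*}$. The only conceptual point that is not completely routine is the appeal to Lemma~\ref{l:strong}, which is exactly why the hypothesis $k > 2$ appears in the statement; for $k \leq 2$ the existence of irregular profiles (as discussed around Lemma~\ref{l:two}) would break the equivalence between ``has a $k$-profile'' and ``has a regular $\P_k$-tangle of~$S_k$,'' and Theorem~\ref{t:main} only controls the regular tangles.
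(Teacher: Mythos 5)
Your proposal is correct and follows essentially the same route as the paper: both apply Theorem~\ref{t:main} with $S = S_k$ and $\F = \P_k$, using the separability and submodularity of $\ra{S_k}$ and Lemma~\ref{l:strong} (for $k>2$) to identify the regular $\P_k$-tangles of $S_k$ with the $k$-profiles of~$G$. Your write-up merely spells out these hypothesis checks in a little more detail than the paper does.
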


\begin{proof}
In Theorem~\ref{t:main}, let $S = S_k$ be the set of separations of order~$<k$ in~$G$, and let $\cc{F} := \cc{P}_k $. Then $\ra{S_k}$ is a separable submodular separation system in the universe of all separations of~$G$, and the regular $\F$-tangles in $G$ are precisely its $k$-profiles.%
   \COMMENT{}
   The assertion now follows from Theorem~\ref{t:main}, as before.
\end{proof}

Theorems \ref{t:Ftangle}--\ref{t:prof} now follow easily: we just have to translate $S$-trees into \td s.

\medbreak

\noindent {\bf Proof of Theorems \ref{t:Ftangle}--\ref{t:prof}.}
Given a set $S$ of separations of~$G$ and an $S$-tree $(T,\alpha)$, with $\alpha(\ra{e}) =: (A_\alpha (\ra{e}), B_\alpha (\ra{e}))$ say, we obtain a \td\ $(T,\V_\alpha)$ of~$G$ with $\V_\alpha = (V_t)_{t\in T}$ by letting
 $$V_t := \bigcap\, \{\, B_\alpha (\ra{e})\mid \ra{e} = (s,t)\in\ra{E(T)}\,\}.$$
 Note that $(T,\alpha)$ can be recovered%
   \COMMENT{}
   from this \td: given just $T$ and~$\V = (V_t)_{t\in T}$, we let $\alpha$ map each oriented edge $\ra{e} = (t_1,t_2)$ of~$T$ to the oriented separation of~$G$ it induces: the separation
 $\big(\bigcup_{t \in T_1} V_t \,,\, \bigcup_{t \in T_2} V_t\big)$
  where $T_i$ is the component of $T-e$ containing~$t_i$.

Recall that the set $\T$ defined in~\eqref{tangleaxiom} contains~$\P$. Hence so does any $\F\supseteq\T$. For such~$\F$, therefore, every $\F$-tangle of~$S_k$ is a $k$-profile, and hence is regular by Lemma~\ref{l:strong} if $k>2$. For $\F_k := \F\cap 2^{\ra{S_k}}$ and
 $$\T_\F (k) := \{\,(T,\V_\alpha)\mid (T,\alpha)\text{ is an $S_k$-tree over } \overline{\F_k^*}\,\}$$
we thus obtain Theorem~\ref{t:Ftangle} directly from Theorem~\ref{t:main}. Similarly, letting
 \begin{eqnarray*}
 \T_\B (k) &:=& \{\,(T,\V_\alpha)\mid (T,\alpha)\text{ is an $S_k$-tree over } \overline{\B_k^*}\,\}\\
 \T_\P (k) &:=& \{\,(T,\V_\alpha)\mid (T,\alpha)\text{ is an $S_k$-tree over } \overline{\P_k^*}\,\}
 \end{eqnarray*}
 yields Theorems \ref{t:block} and~\ref{t:prof} as corollaries of Theorems \ref{t:blocks} and~\ref{t:profs}.\qed

\section{Width parameters}\label{sec:width}

In this section we derive some bounds for the block and profile width of a graph that follow easily from our main results combined with those of~\cite{TangleTreeAbstract, TangleTreeGraphsMatroids} and~\cite{GMX}.

Given a star $\sigma$ of separations in a graph, let us call the set $\bigcap \{\,B\mid (A,B)\in\sigma\}$ the {\em interior\/} of~$\sigma$ in~$G$. For example, every star in $\cc{P}_k^*$ is of the form
\[
\{ (A,B), (B \cap C, A \cup D), (B \cap D, A \cup C) \} \subseteq S_k,
\]
and hence every vertex of its interior lies in at least two of the separators $A \cap B$, $(B \cap C) \cap (A \cup D)$ and $(B \cap D) \cap (A \cup C)$. Since all these separations are in~$S_k$, the interior of any star in $\cc{P}_k^*$ thus has size at most $3(k-1) /2$.

We can apply this observation to obtain the following upper bound on the profile-width $\pw(G)$ of a graph~$G$ in terms of its tree-width~$\tw(G)$:

\begin{theorem}\label{t:profwidth}
For every graph $G$,
\begin{equation}\label{twbound}\textstyle
\pw(G) \leq \tw{}(G) + 1 \leq \frac{3}{2}\pw{}(G) .
\end{equation}
\end{theorem}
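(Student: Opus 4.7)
The plan is to prove the two inequalities of~\eqref{twbound} separately.

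For the lower bound $\pw(G) \le \tw(G) + 1$, I would pass through Theorem~\ref{t:profs} (equivalently Corollary~\ref{invariants}) to reduce the claim to showing that a graph $G$ with $\tw(G) \le w$ admits no $(w+2)$-profile. Suppose for a contradiction that $P$ is such a profile and fix a tree-decomposition $(T,\V)$ of $G$ of width $w$. Each edge of $T$ induces a separation of order at most $w+1$, which lies in $S_{w+2}$ and is thus oriented by~$P$; since $T$ is a finite tree, the resulting orientation of $E(T)$ has a sink~$t^*$. At $t^*$, every incident separation $(A_i, B_i)$ lies in $P$ and satisfies $V_{t^*} \subseteq B_i$; moreover $\bigcap_i B_i = V_{t^*}$ and $V_{t^*} \cup \bigcup_i A_i = V$. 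Since $w+2 > 2$, Lemma~\ref{l:strong} guarantees that $P$ is regular, so in particular the small separation $(V_{t^*},V) \in P$. I would then iteratively combine $(V_{t^*},V)$ with $(A_1,B_1),(A_2,B_2),\ldots$ via the profile axiom: each intermediate join $(V_{t^*} \cup A_1 \cup \cdots \cup A_i,\, B_1 \cap \cdots \cap B_i)$ has separator contained in $V_{t^*}$, hence of size at most $w+1 < w+2$, so lies in $S_{w+2}$ and must be in $P$. The final join produces $(V,V_{t^*}) \in P$, contradicting $(V_{t^*},V) \in P$.

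For the upper bound $\tw(G)+1 \le \tfrac{3}{2}\pw(G)$, set $k = \pw(G)$ and use Theorem~\ref{t:profs} to pick an $S_{k+1}$-tree $(T,\alpha)$ over $\overline{\P^*_{k+1}}$, whose induced tree-decomposition $(T,\V_\alpha)$ has each bag $V_t$ equal to the interior of the star $\sigma_t$ at~$t$. It suffices to show $|V_t| \le 3k/2$. If $\sigma_t = \{\ra{x}\}$ is a co-small singleton, then $|V_t| \le |\ra{x}| \le k$. Otherwise $\sigma_t$ is a three-star in $\hat{\P^*_{k+1}}$ obtained from some uncrossing of a $\P_{k+1}$-triple by a (possibly iterated) sequence of shifts. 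The preceding observation already yields the property \emph{every interior vertex lies in at least two of the three separators} for stars directly in $\P^*_{k+1}$; I would extend it to all of $\hat{\P^*_{k+1}}$ by induction on the number of shifts. For the inductive step, let $\sigma = \{\ra{x}_1,\ra{x}_2,\ra{x}_3\}$ with $\ra{x}_i = (A_i,B_i)$ have the property, and let $\sigma' = \{\ra{x}_1 \vee \ra{s},\, \ra{x}_2 \wedge \la{s},\, \ra{x}_3 \wedge \la{s}\}$ be a shift of $\sigma$ from some $\ra{r} \le \ra{x}_1$ to some $\ra{s} = (A_s,B_s)$ emulating~$\ra{r}$. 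A direct computation gives $I_{\sigma'} = (B_1 \cap A_s \cap B_s) \sqcup (I_\sigma \cap (B_s \setminus A_s))$, and I would verify the property on each piece. With the property established, $2|V_t| \le 3k$ and hence $|V_t| \le 3k/2$, giving $\tw(G)+1 \le 3k/2 = \tfrac{3}{2}\pw(G)$.

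The main obstacle is this shift-preservation step. For the first piece $B_1 \cap A_s \cap B_s$ of $I_{\sigma'}$, each vertex $v$ automatically lies in the separator of $\ra{x}_1 \vee \ra{s}$, and I would show it lies in a second separator by arguing that if $v \notin A_2$ and $v \notin A_3$, then $v \in B_2 \cap B_3$ (via $V = A_i \cup B_i$), hence $v \in I_\sigma$; the inductive property of $\sigma$ then places $v$ in two of the three sets $A_1, A_2, A_3$, contradicting the assumption. For the second piece, each vertex already lies in $I_\sigma$, and the inductive property of $\sigma$ transfers directly to $\sigma'$ via the observation that on this piece, membership in the $i$-th separator of $\sigma'$ is equivalent to membership in $A_i$.
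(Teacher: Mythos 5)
Your proposal is correct and follows the same overall strategy as the paper, but it fills in, by hand, two steps that the paper handles by citation or leaves implicit, so a brief comparison is in order. For the first inequality the paper simply invokes the tree-width duality theorem of Diestel and Oum (no \td\ into parts of order $<k$ iff there is an $\S^k$-tangle of $S_k$) together with the remark that every regular $k$-profile is an $\S^k$-tangle; your sink argument on an oriented decomposition tree, followed by the iterated application of the profile axiom starting from the small separation $(V_{t^*},V)$, is precisely a self-contained unpacking of those two ingredients (note that your chain of corner separations all having separator inside $V_{t^*}$ is exactly what makes the ``easy induction'' alluded to in the paper work). For the second inequality the paper states only that every star in $\P_{k+1}^*$ has each interior vertex in at least two of its three separators and concludes that all bags have size at most $3k/2$; but the bags of the \td\ are interiors of stars in $\overline{\P^*_{k+1}}$, which also contains co-small singletons and all iterated shifts. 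You correctly identify this gap and close it: the bound $|B|\le k$ for co-small singletons is right, and your decomposition $I_{\sigma'}=(B_1\cap A_s\cap B_s)\sqcup\bigl(I_\sigma\cap(B_s\setminus A_s)\bigr)$ together with the two case analyses does show that the ``every interior vertex lies in two separators'' property is preserved by shifting (using $V=A_i\cup B_i$, which is available since we are in the graph universe). This shift-preservation lemma is the one genuinely new piece of work in your write-up, and it is sound.

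Two cosmetic caveats. Elements of $\P^*_{k+1}$ and of $\hat{\P^*_{k+1}}$ need not literally be three-element stars (uncrossing and the original $\P$-``triples'' can collapse to sets of size one or two), so your induction should be phrased for stars with at most three elements; the counting argument is unaffected. And in the sink step you should dispose of the degenerate case $V_{t^*}=V$ separately (there $\{(V,V)\}\in\P$ already forbids a profile); this, like the implicit restriction to $\tw(G)\ge 1$, is an edge case the paper's own proof also passes over.
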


\begin{proof}
For the first inequality, note that $\tw(G)+1$ is the largest integer~$k$ such that $G$ has no \td\ into parts of order~$<k$. By the duality theorem for tree-width from~\cite{TangleTreeGraphsMatroids}, having no such \td\ is equivalent to admitting an $\S^k$-tangle of~$S_k$, where
 $$\cc{S}^n = \big\{\, \tau\subseteq\ra{U} : \tau\text{ is a star and } \big| \bigcap\{\,B: (A,B)\in\tau\,\} \big| < n\,\big\}$$
and $\ra{U}$ is the universe of all separations of~$G$. But among the $\S^k$-tangles of~$S_k$ are all the $k$-profiles of~$G$. (An easy induction on~$|\tau|$ shows that a regular%
   \COMMENT{}
   $k$-profile has no subset $\tau\in\S^k$; cf.\ Lemma~\ref{l:strong} and \cite[Prop.\,3.4]{ProfilesNew}.)%
  \COMMENT{} 
   Therefore $G$ has no $k$-profile for $k > \tw{}(G)+1$, which Corollary~\ref{invariants} translates into $\pw(G) \leq \tw{}(G) +1$.%
   \COMMENT{}

For the second inequality, recall that if $k := \pw(G)$ then $G$ has a \td\ in~$\T_\P(k+1)$. The parts of this \td\ are interiors of stars in~$\overline{\P_{k+1}^*}$, so they have size at most~$3k/2$.%
   \COMMENT{}
   This \td, therefore, has width at most $(3k/2)-1$, which thus is an upper bound for~$\tw(G)$.
\end{proof}

We can also relate the profile-width of a graph to its branch-width, as follows. In order to avoid tedious exceptions for small~$k$, let us define the {\em adjusted branch-width\/} of a graph~$G$ as
 $$\brw(G) := \min \{\,k\mid G\text{ has no } S_{k+1}\text{-tree over }\T^*\}. $$
 By~\cite[Theorem 4.4]{TangleTreeGraphsMatroids}, this is equivalent to the {\em tangle number\/} of~$G$, the greatest~$k$ such that $G$ has a $k$-tangle. For $k\ge 3$ it coincides with the original branch-width of~$G$ as defined by Robertson and Seymour~\cite{GMX}.%
   \footnote{Our {\em adjusted branch-width\/} is the dual parameter to the tangle number for all~$k$, while the original branch-width from~\cite{GMX} achieves this only for $k\ge 3$: it deviates from the tangle number for some graphs and $k\le 2$. See~\cite[end of Section~4]{TangleTreeGraphsMatroids} for a discussion.}

Robertson and Seymour \cite{GMX} showed that the adjusted branch-width of a graph is related to its tree-width in the same way as we found that the profile-width is:
\begin{equation}\label{brwbound}\textstyle
\brw{}(G)  \leq \tw{}(G) + 1 \leq \frac{3}{2}\brw{}(G) .
\end{equation}

Together, these inequalities imply the following relationship between branch-width and profile-width:

\begin{corollary}\label{c:branchwidth}
For every graph $G$,
\begin{equation}\textstyle
\brw{}(G) \leq \pw{}(G) \leq \frac{3}{2}\brw{}(G).
\end{equation}
\end{corollary}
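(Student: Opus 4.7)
The plan is to prove the two inequalities separately, reading both off from results stated earlier in the paper together with the auxiliary Lemma in Section~\ref{s:prof}.

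For the lower bound $\brw(G)\le\pw(G)$, the key observation is that every tangle is a profile. More precisely, recall from the definition in Section~\ref{sec:width} and \cite[Theorem 4.4]{TangleTreeGraphsMatroids} that $\brw(G)$ equals the tangle number of~$G$, i.e., the largest~$k$ for which $G$ admits a $k$-tangle. By the Lemma in Section~\ref{s:prof}, every $k$-tangle of~$G$ is a $k$-profile of~$G$, so the tangle number is bounded by the profile number $\pi(G)$. By Corollary~\ref{invariants}, $\pi(G)=\pw(G)$, and chaining these facts yields $\brw(G)\le\pw(G)$.

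For the upper bound $\pw(G)\le\frac{3}{2}\brw(G)$, I would simply concatenate the upper bound from Theorem~\ref{t:profwidth} with the Robertson--Seymour bound \eqref{brwbound}: namely,
\[
\pw(G)\;\le\;\tw(G)+1\;\le\;\tfrac{3}{2}\brw(G),
\]
where the first inequality is the left half of \eqref{twbound} and the second is the right half of \eqref{brwbound}.

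I do not expect any real obstacle: both halves are immediate once the invariants are identified via Corollary~\ref{invariants} and the earlier width bounds are in hand. The only mildly delicate point is that $\brw(G)$ is the \emph{adjusted} branch-width rather than the Robertson--Seymour branch-width, so we do need to invoke the definition of $\brw$ from Section~\ref{sec:width} (and hence \cite[Theorem 4.4]{TangleTreeGraphsMatroids}) in order to know that $\brw$ equals the tangle number for all~$k$; this is precisely what ensures the inequality $\brw\le\pw$ holds uniformly without small-$k$ exceptions.
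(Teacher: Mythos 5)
Your proposal is correct and follows essentially the same route as the paper: the lower bound comes from the fact that every $k$-tangle is a $k$-profile together with the identification of $\brw$ with the tangle number and of $\pw$ with the profile number via Corollary~\ref{invariants}, and the upper bound is exactly the concatenation $\pw(G)\le\tw(G)+1\le\frac{3}{2}\brw(G)$ of \eqref{twbound} and \eqref{brwbound}. No gaps.
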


\begin{proof}
The first inequality follows from the fact that $k$-tangles are $k$-profiles, and that the largest $k$ for which $G$ has a $k$-tangle equals the adjusted branch-width: thus,
$$\brw(G) = k\le\pi(G) = \pw(G)$$
by Corollary~\ref{invariants}.

For the second inequality, notice that $\pw{}(G) \specrel\leq{\eqref{twbound}} \tw{}(G) + 1 \specrel\leq{\eqref{brwbound}} \frac{3}{2}\brw{}(G)$.
\end{proof}

Since every $k$-block defines a $k$-profile, Corollary~\ref{invariants} implies%
   \COMMENT{}
   that the block-width of a graph is a lower bound for its profile-width, and hence by~\eqref{twbound} also for it tree-width (plus~1). Conversely, however, no function of the tree-width of a graph can be a lower bound for its block-width. Indeed, the tree-width of the $k \times k$-grid $H_k$ is at least~$k$ (see \cite{DiestelBook16}) but $H_k$~contains no $5$-block: in every set of $\geq 5$ vertices there are two non-adjacent vertices, and the neighbourhood of either vertex is then a set of size~$4$ which separates the two vertices. Therefore there exist graphs with bounded block-width and arbitrarily high tree-width.

Since large enough tree-width forces both large profile-width~\eqref{twbound} and large branch-width~\eqref{brwbound}, the grid example shows further that making these latter parameters large cannot force the block-width of a graph above~4.

\bibliographystyle{plain}
\bibliography{collective}

\end{document}